\newcommand{\oomit}[1]{}	
\newtheorem{assumption}{Assumption}
\begin{document}


\title{Robust Non-termination Analysis of Numerical Software \thanks{This work from Bai Xue is funded by
CAS Pioneer Hundred Talents Program under grant No. Y8YC235015, and from Naijun Zhan is funded partly by NSFC under grant No.
61625206 and 61732001, by ``973 Program" under grant No. 2014CB340701,
and by the CAS/SAFEA International Partnership Program for Creative Research Teams, and and from  Yangjia Li is funded by NSFC under grant No.
61502467.}}
\author{
Bai Xue \inst{1`} and Naijun Zhan \inst{1,2} and Yangjia Li \inst{3,1} and Qiuye Wang \inst{1,2}}
\institute{$~^1$ State Key Lab. of Computer Science, Institute of Software, CAS,  China \\
    $~^2$  University of Chinese Academy of Sciences, CAS,  China \\
    $~^3$  University of Tartu, Estonia}

%


%
%

\toctitle{Lecture Notes in Computer Science}
\tocauthor{Authors' Instructions}
\maketitle

\begin{abstract}
Numerical software is widely used in safety-critical systems such as aircrafts, satellites, car engines and many other fields, facilitating dynamics control of such systems in real time. It is therefore absolutely necessary to verify their correctness.
Most of these verifications are conducted under ideal mathematical models, but their real executions may not follow the models exactly. Factors that are abstracted away in models such as rounding errors can change behaviors of systems essentially. As a result, guarantees of verified properties despite the present of disturbances are needed. In this paper, we attempt to address this issue of nontermination analysis of numerical software. Nontermination is often an unexpected behaviour of computer programs and may be problematic for applications such as real-time systems with hard deadlines. We propose a method for robust conditional nontermination analysis that can be used to under-approximate the maximal robust nontermination input set for a given program. Here robust nontermination input set is a set from which the program never terminates regardless of the aforementioned disturbances. Finally, several examples are given to illustrate our approach.

\end{abstract}

\begin{keywords}
Numerical software; nontermination analysis; robust verification
\end{keywords}
\section{Introduction}
\label{Int}
Software is ubiquitous in mission-critical and safety-critical industrial infrastructures as it is, in principle, the most effective way to manipulate complex systems in real time. However, many computer scientists and engineers have experienced costly bugs in embedded software. Examples include the failure of the Ariane 5.01 maiden flight (due to an overflow caused by an unprotected data conversion from a too large 64-bit floating point to a 16-bit signed integer value), the failure of the Patriot missile during the Gulf war (due to an accumulated rounding error), the loss of Mars orbiter(due to a unit error). Those examples indicate that mission-critical and safety-critical software may be far from being safe \cite{cousot2010}. It is therefore absolutely necessary to prove the correctness of software by using formal, mathematical techniques that enable the development of correct and reliable software systems.



The dominant approach to the verification of programs is called \emph{Floyd-Hoare-Naur inductive assertion approach} \cite{Floyd67,Hoare69,Naur66}. It uses \emph{pre-} and \emph{post-condition} to specify the condition of initial states and the property that should be satisfied by terminated states, and use \emph{Hoare logic} to reason about properties of programs.
The hardest parts of this approach are \emph{invariant generation} and \emph{termination analysis}. It is well-know that the termination or nontermination problem is undecidable, and even not semi-decidable in general. Thus, more practical approaches include present some sufficient conditions for termination, or some sufficient conditions for nontermination, or put these two types of conditions in parallel, or prove the decidability for some specific families of programs, e.g.,  \cite{gupta2008,velroyen2008,giesl2014,Chen2014,rebiha2014,Larraz2014,li2017,borralleras2017}.

On the other hand, most of these verifications are conducted under ideal mathematical models, but their real executions may not follow the models exactly. Factors that are abstracted away in models such as rounding errors can change behaviors of systems essentially. As a result, guarantees of verified properties despite the present of disturbances are needed. We notice that this problem affects most existing termination/nontermination as well.

In \cite{FAC11}, the authors presented the following example:
   \begin{example}
   Consider a simple loop
   \[ \text{Q1:} \quad \mathbf{while} ~ (B \mathbf{x} >\mathbf{0}) ~ \{ \mathbf{x} \, := \, A \mathbf{x} \},\]
 where $A = \left(\begin{matrix} 2& -3 \\ -1 & 2 \end{matrix}\right)$,
     $B = \left(\begin{matrix} 1 & b \\ -1 & b \end{matrix}\right)$ with
     $b= -\frac{1127637245}{651041667} = -\sqrt{3} + \epsilon \sim -
     1.732050807$.

     So we have $\epsilon = \sqrt{3} - (-\frac{1127637245}{651041667}) >0$ a small positive number.
     Here we take 10 decimal digits of precision.
 \end{example}
According to the termination decidability result on simple loops proved in \cite{Tiwari04}, $\text{Q1}$ terminates based on exact computation. But unfortunately, it does not terminate in practice as the core decision procedure given in \cite{Tiwari04} invokes a procedure to compute \emph{Jordan normal form} based on numeric computation for a given matrix, and thus the floating error has to be taken into account.
In order to address this issue, a symbolic decision procedure was proposed in \cite{FAC11}. However, a more interesting and challenging issue is to find a systematic way to take all possible disturbances into account during conducting termination and non-termination analysis in practical numerical implementations.


\oomit{Termination and nontermination are a pair of fundamental properties of computer programs. Arguably, the majority of code is required to terminate, e.g., dispatch routines of drivers or other event-driven code, GPU programs, etc-and the existence of non-terminating executions is a serious bug. Such a bug may manifest by freezing a device or an entire system or by causing a multi-region cloud service disruption. In contrast, soem compter programs are required to nonterminate since nontermination could be used prove liveness and safety properties, e.g. \cite{owicki1982,manna1984,alpern1987}. Thus, proving nontermination becomes an interesting problem, as part of the process estbalishing correctnes of a program. As opposed to the extensively studied problem of proving termination, the search for reliable and scalable methods for proving nontermination is still in infancy. In addition, most of existing approaches for nontermination analysis are based on some common assumptions, for instance, all calculations in computer programs are conducted using exact real arithmetic (This assumption applies to termination analysis as well). However, the use of exact real arithmetic is not always feasible in practice. Numerical software, common in scientific computing or embedded systems, inevitably uses an approximation of the real arithmetic in which most algorithms are designed. In many domains, roundoff errors are not the only source of inaccuracy and measurement as well as truncation errors further increase the uncertainty of the computed results \cite{benz2012,darulova2014}. Thus, practical implementations of the algorithm embedded in the computer program may fail, even when the undeyling algorithm is sound under those assumptions. Such failures can cause disastrous consequences, especially for safety critical systems, such as the sinking of of the Sleipner A offshore platform due to floating point arithmetic errors \cite{arnold2009}, and are inferred to as robustness problems\footnote{A program, which has to guarantee certain properties, is said robust if it satisfies the requirements not only for its nominal values but also in the presence of perturbations.}. }

\oomit{In this paper, we propose a control-theoretic framework for robust conditional nontermination analysis of numerical software, which can be
used to under-approximate
the robust nontermination input set, from which a given program always non-terminates, regardless of the aforementioned disturbances.}



\oomit{In most of existing work on termination analysis of numerical software, which is widely used in safety-critical systems such as aircrafts, satellites and car engines, it is always assumed that verification is conducted with exact computation. But in practice, such verification could affected very much due to uncertain inputs accounting for round-off errors and additive perturbations from real-world phenomena to hardware.
So, it is not surprising that a verified terminated program does not terminated in practice, and vice versa. }

\oomit{Given a program composed of a single loop with a possibly complicated switch-case type loop body, capturing a wide class of critical programs, as
typically found in current embedded systems  we first transform such programs into switched discrete-time nonlinear systems and then characterize the maximal robust nontermination set by means of the value function of a suitable infinite horizon state-constrained optimal control problem. If there does not exists a lower semicontinuous solution to the constructed  optimal control problem, the robust nontermination set is empty. In the case of polynomial dynamics in the switched discrete-time nonlinear system and semi-algebraic state and input constraints, the optimal control problem is relaxed as a semi-definite programming problem, which gives an inner-approximation of the maximal robust nontermination set if a solution is found successfully. Such relaxation is sound but incomplete.  Several illustrative examples demonstrate our approach.}

In this paper we attempt to address this challenge, and propose a framework for robust nontermination analysis for numerical software based on control theory as in \cite{roozbehani2013}, which proposes a control-theoretic framework based on Lyapunov invariants to conduct verification of numerical software.
Non-termination analysis proves that programs, or parts of a program, do not terminate. Non-termination is often an unexpected behaviour of computer programs and implies the presence of a bug. If a nonterminating computation occurs, it may be problematic for applications such as real-time systems with hard deadlines or situations when minimizing workload is important.  \oomit{The robust conditional nontermination is the problem of deciding the robust nontermination set from which the given program starting can not terminate, regardless of the actions of uncertain inputs accounting for round-off errors and additive perturbations from real-world phenomena to hardware.}
In this paper, computer programs of interest are restricted to a class of computer programs composed of a single loop with a complicated switch-case type loop body. These programs can also be viewed as a constrained piecewise discrete-time dynamical system with time-varying uncertainties.
We reformulate the problem of determining robust conditional nontermination as finding the maximal robust nontermination input set of the corresponding dynamical system, and characterize that set using a value function, which turns out to be a solution to a suitable mathematical equation. In addition, when the dynamics of the piecewise discrete-time system in each mode is polynomial and the state and uncertain input constraints are semi-algebraic, the optimal control problem is relaxed as a semi-definite programming problem, to which its polynomial solution forms an inner-approximation of the maximal robust nontermination input set when exists. Such relaxation is sound but incomplete. Finally, several examples are given to illustrate our approach.

It should be noticed that the concept of robust nontermination input sets is essentially equivalent to the maximal robustly positive invariants in control theory. Interested readers can refer to, e.g., \cite{blanchini2008set,tahir2012,roozbehani2013,schaich2015}. Computing the maximal robustly positive invariant of a given dynamical system is still a long-standing and challenging problem not only in the community of control theory. Most existing works on this subject focus on linear systems, e.g. \cite{rakovic2005,kouramas2005,tahir2012,schaich2015,trodden2016}. Although some methods have been proposed to synthesize positively invariants for nonlinear systems, e.g., the barrier certificate generation method as in \cite{PrajnaJ04,prajna2007} and the region of attraction generation method as in \cite{jarvis2003,topcu2008,luk2015,giesl2015review}. These methods, however, resort to bilinear sum-of-squares programs, which are notoriously hard to solve. In order to solve the bilinear sum-of-squares programs, a commonly used method is to employ some form of alteration (e.g.,\cite{jarvis2003,topcu2010,luk2015}) with a feasible initial solution to the bilinear sum-of-squares program. Recently,\cite{sassi2012controller,sassi2014} proposed linear programming based methods to synthesize maximal (robustly) positive polyhedral invariants. Contrasting with aforementioned methods, in this paper we propose a semi-definite programming based method to compute semi-algebraic invariant. Our method does not require an initial feasible solution.

\oomit{\textit{Contributions:} So, the main contributions include:
\begin{itemize}
\item Our primary contribution in this paper is proposing a control-theoretic framework to conduct verification of numerical software, and associate the problem of robust nontermination of a class of computer programs encountered often in current critical embedded systems to the constrained optimal control problem, to which the zero sublevel set of a lower semicontinuous solution characterizes the maximal robust nontermination input set.
\item As a by-product, we also provide an approach to estimate robustly positive invariants for piecewise discrete nonlinear systems, which is a long-standing problem in control theory.
\end{itemize}
}




\textit{Organization of the paper.} The structure of this paper is as follows. In Section \ref{Pre}, basic notions used throughout this paper and the problem of interest are introduced. Then we elucidate our approach for performing conditional non-termination analysis in Section \ref{IG}. After demonstrating our approach on several illustrating  examples in Section \ref{ex}, we discuss related work in Section \ref{RW} and finally conclude this paper in Section \ref{con}.

\section{Preliminaries}
\label{Pre}
In this section we describe the programs which are considered in this paper and we explain how to analyze them through their representation as piecewise discrete-time dynamical systems.

The following basic notations will be used throughout the rest of this paper: $\mathbb{N}$ stands for the set of nonnegative integers and $\mathbb{R}$ for
the set of real numbers; $\mathbb{R}[\cdot]$ denotes the ring of polynomials in variables given by the argument, $\mathbb{R}_d[\cdot]$ denotes the vector space of real multivariate polynomials of degree $d$, $d\in\mathbb{N}$. Vectors are denoted by boldface letters.

\subsection{Computer Programs of Interest}
In this paper the computer program of interest, as described in \textbf{Program} \ref{alg}, is composed of a single loop with a possibly complicated switch-case type loop body, in which variables $\bm{x}=(x_1,\ldots,x_n)$ are assigned using parallel assignments $(x_1,\ldots,x_n):=\bm{f}(x_1,\ldots,x_n,d_1,\ldots,d_m)$, where $\bm{d}=(d_1,\ldots,d_m)$ is the vector of uncertain inputs, of which values are sampled nondeterministically from a compact set, i.e. $(d_1,\ldots,d_m) \in D$, such as round-off errors in performing computations. 
 The form of programs under consideration is given in \textbf{Program} \ref{alg}.
\begin{algorithm}
$\bm{x}:=\bm{x}_0$;\tcc{$\bm{x}_0\in \Omega$}
\While{$\bm{x}\in X_0$}{
    \tcc{$\bm{d} \in D$}
\If {$\bm{x}\in X_1$}{$\bm{x}:=\bm{f}_1(\bm{x},\bm{d})$;}
\ElseIf{$\bm{x}\in X_2$}{$\bm{x}:=\bm{f}_2(\bm{x},\bm{d})$;}
$\ldots$\\
\ElseIf{$\bm{x} \in X_k$}{$\bm{x}:=\bm{f}_k(\bm{x},\bm{d})$;}
}
\caption{Computer Programs of Interest}
\label{alg}
\end{algorithm}
In \textbf{Program} \ref{alg}, \oomit{$\bm{x}=(x_1,\ldots,x_n)\in \mathbb{R}^n$ is the vector of the program variables. $\bm{d}=(d_1,\ldots,d_m)$, sampled nondeterministically from $D$, represent the program's disturbance inputs such as round-off errors in performing computations, where} $D=\{\bm{d}\mid \bigwedge_{i=1}^{n_{k+1}} h_{k+1,i}(\bm{d})\leq 0\}$ is a compact set in $\mathbb{R}^m$ and $h_{k+1,i}: \mathbb{R}^m \mapsto \mathbb{R}$, is continuous over $\bm{d}$. $\Omega \subseteq \mathbb{R}^n$ stands for
the initial condition on inputs;
 $X_0=\{\bm{x}\in \mathbb{R}^n\mid \bigwedge_{i=1}^{n_{0}} [h_{0,i}(\bm{x})\leq 0]\}$ stands for the loop condition, which
 is a compact set in $\mathbb{R}^n$; $X_j=\{\bm{x}\in \mathbb{R}^n\mid \bigwedge_{i=1}^{n_j}[h_{j,i}(\bm{x})\rhd 0]\}$, $j=1,\ldots,k$,
 stands for the $j-$th branch conditions, where $\rhd\in \{\leq, <\}$. $h_{j,i}: \mathbb{R}^n \mapsto \mathbb{R}$, $j=0,\ldots,k$, $i=1,\ldots,n_j$, $\bm{f}_l: \mathbb{R}^n \times D\mapsto \mathbb{R}^n$, $l=1,\ldots,k$, are continuous functions over $\bm{x}$ and over $(\bm{x},\bm{d})$ respectively. Moreover, $\{X_1,\ldots, X_k\}$ forms a complete partition of
 $\mathbb{R}^n$, i.e. $X_i\cap X_j=\emptyset$ for $\forall i\neq j$, where $i,j\in \{1,\ldots,k\}$, and $\cup_{j=1}^k X_j=\mathbb{R}^n$.

As described in \textbf{Program} \ref{alg},  an update of the variable $\bm{x}$ is executed by the $i$-th branch $\bm{f}_i:\mathbb{R}^n\times D\mapsto \mathbb{R}^n$ if and only if the current value of $\bm{x}$ satisfies the $i$-th branch condition $X_i$.

\subsection{Piecewise Discrete-time Systems}
\label{PPS}
In this subsection we interpret \textbf{Program} \ref{alg} as a constrained piecewise discrete-time dynamical system with uncertain inputs.
Formally,
\begin{definition}
\label{ps}
A constrained piecewise discrete-time dynamical system (PS) is a quintuple $(\bm{x}_0,X_0,\mathcal{X},D,\mathcal{L})$ with
\begin{enumerate}
\item[-] $\bm{x}_0\in \Omega$ is the condition on initial states;
\item[-] $X_0\subseteq \mathbb{R}^n$ is the domain constraint, which is
a compact set. A path can evolve complying with
   the discrete dynamics only if
its current state is in $X_0$;
\item[-] $\mathcal{X}:=\{X_i,i=1,\ldots,k\}$ with $X_i$ as interpreted in \textbf{\bf{Program}} $\ref{alg}$;
\item[-] $D\subseteq \mathbb{R}^m$ is the set of  uncertain inputs;
\item[-] $\mathcal{L}:=\{\bm{f}_i(\bm{x},\bm{d}),i=1,\ldots,k\}$ is the family of the continuous functions $\bm{f}_i(\bm{x},\bm{d}): X_i\times D\mapsto \mathbb{R}^n$,
\end{enumerate}

\end{definition}

In order to enhance the understanding of PS, we use the following figure, i.e. Fig. \ref{fig:another_label}, to illustrate it further.
\begin{figure}[ht]
	\centering
	\begin{tikzpicture}
		\node[state, initial] (p1) {$s_0$};
		\node[left of=p1] (p0) {};
		\node[state, accepting, below of=p1] (p2) {$s_1$};
		\node[state, right of=p1, xshift=3cm] (p3) {$s_2$};
		\node (pdots) at (2.5, 1.3) {\huge \vdots};

		\draw (p0) edge[] node[above]{\scriptsize $\bm{x}:=\bm{x}_0$} (p1);
		\draw (p1) edge[left] node[align=center]{\scriptsize $\bm{x} \notin X_0$, \\ \scriptsize $\bm{x}:=\bm{x}$} (p2)
		(p1) edge[below, bend right] node[]{\scriptsize $\bm{x} \in X_0, \bm{x}:=\bm{x}$} (p3);
		\draw (p3) edge[] node[above]{\scriptsize $\bm{x} \in X_1, \bm{x}:=\bm{f}_1(\bm{x}, \bm{d})$} (p1)
			  (p3) edge[bend right=20] node[above]{\scriptsize $\bm{x} \in X_2, \bm{x}:=\bm{f}_2(\bm{x}, \bm{d})$} (p1)
			  (p3) edge[bend right=60] node[above]{\scriptsize $\bm{x} \in X_k, x:=\bm{f}_k(\bm{x}, \bm{d})$} (p1);

	\end{tikzpicture}
	\caption{An illustrating graph of PS}
	\label{fig:another_label}
\end{figure}
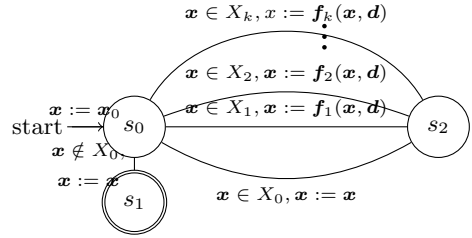
From now on, we associate a PS representation to each program of the form  \textbf{Program} \ref{alg}. Since a program may admit several PS representations, we choose one of them, but the choice does not change the results provided in this paper.

\begin{definition}
An input policy $\pi$ is an ordered sequence $\{\bm{\pi}(i),i\in \mathbb{N}\}$, where $\bm{\pi}(\cdot): \mathbb{N}\mapsto D$,
and $\Pi$ is defined as the set of input policies, i.e. $\Pi=\{\pi\mid \bm{\pi}(\cdot): \mathbb{N}\mapsto D\}$.
\end{definition}
   If an input policy $\pi$ makes \textbf{Program} \ref{alg} non-terminated from an initial state $\bm{x}_0$, then the trajectory
     $\bm{x}_{\bm{x}_0}^{\pi}: \mathbb{N}\mapsto \mathbb{R}^n$
     from $\bm{x}_0$ following the discrete dynamics is defined by
     \begin{equation}
\label{path}
 \bm{x}_{\bm{x}_0}^{\pi}({l+1})=\bm{f}(\bm{x}_{\bm{x}_0}^{\pi}(l),\bm{\pi}(l)),
\end{equation}
where  $\bm{x}_{\bm{x}_0}^{\pi}(0)  =\bm{x}_0$,
  $\forall l \in \mathbb{N}. \bm{x}_{\bm{x}_0}^{\pi}(l) \in X_0$, and \[\bm{f}(\bm{x},\bm{d})=1_{X_1}\cdot\bm{f}_1(\bm{x},\bm{d})+\cdots+1_{X_k}\cdot\bm{f}_k(\bm{x},\bm{d})\]
with $1_{X_i}: X_i\mapsto \{0,1\}$, $i=1,\ldots,k$, representing the indicator function of the set $X_i$,  i.e.
\[1_{X_i}:=\begin{cases}
   1, \quad \text{if }\bm{x}\in X_i,\\
   0, \quad \text{if }\bm{x}\notin X_i.
\end{cases}\]
Consequently, \textbf{Program} \ref{alg} is said to be robust nontermination starting from an initial state $\bm{x}_0\in\Omega$ if for any input policy $\pi\in \Pi$, $\forall l\in \mathbb{N}.~~\bm{x}_{\bm{x}_0}^{\pi}(l)\in X_0$ holds. Formally, 

\begin{definition}
\label{cp}
A program of \textbf{\bf{Program}} $\ref{alg}$ is said to be robust non-terminating w.r.t. an initial state $\bm{x}_0\in X_0$, if
\begin{equation}
\label{path1}
\forall \pi \in \Pi.~\forall l \in \mathbb{N}.~~\bm{x}_{\bm{x}_0}^{\pi}(l)\in X_0.
\end{equation}
\end{definition}

Now, we define our problem of deciding a set of initial states rendering \textbf{Program} \ref{alg}  robust non-termination.
\begin{definition}[Robust Nontermination Set]
\label{RNS}
A set $\Omega$ of initial states in $\mathbb{R}^n$ is a robust nontermination set for a program $P$ of the form \textbf{\bf{Program}} $\ref{alg}$ if
$P$ is robustly non-terminating w.r.t. $\bm{x}_0$ for any $\bm{x}_0\in \Omega$. We call $\{\bm{x}_0 \in \mathbb{R}^n
  \mid P$ is robustly non-terminating w.r.t. $\bm{x}_0\}$
 \emph{the maximal robust non-termination set}, denoted by $\mathcal{R}_0$.
 \oomit{, where
\begin{equation}
\label{R0}
\mathcal{R}_0:=\{\bm{x}_0\mid \forall \pi\Rightarrow \bm{x}_{\bm{x}_0}^{\pi}(l)\in X_0, \forall l\in \mathbb{N}\},
\end{equation}
$\pi$ is the input policy as defined in \eqref{policy} and $\bm{x}_{\bm{x}_0}^{\pi}(\cdot)$ is defined in \eqref{path}. Note that throughout this paper, we assume that $\mathcal{R}_0\neq \emptyset$.}
\end{definition}

From Definition \ref{RNS}, we observe that $\mathcal{R}_0$ is a subset of $X_0$ such that all runs of \textbf{Program} \ref{alg} starting from it can not breach it forever, i.e. if $\bm{x}_0\in \mathcal{R}_0$, $\bm{f}(\bm{x}_0,\bm{d})\in \mathcal{R}_0$ for $\forall \bm{d}\in D$. 
Therefore, the set $\mathcal{R}_0$ is equivalent to the maximal robust positively invariant for PS \eqref{path} in control theory.  For the formal concept of maximal robust positively invariant, please refer to, e.g., \cite{blanchini2008set,tahir2012,schaich2015}. 

\section{Robust Non-Termination Set Generation}
\label{IG}
In this section we elucidate our approach of addressing the problem of robust conditional nontermination for \textbf{Program} \ref{alg}, i.e. synthesizing robust non-termination sets as presented in Definition \ref{RNS}. For this sake, we firstly in Subsection \ref{charac} characterize the maximal robust non-termination set $\mathcal{R}_0$ by means of the value function, which is a solution to  a mathematical equation. Any  solution to this optimal control problem generates a robust non-termination set. Then, in the case that $\bm{f}_i$, $i=1,\ldots,k$, is polynomial over $\bm{x}$ and $\bm{d}$, and the constraint sets over $\bm{x}$ and $\bm{d}$, i.e. $X_j$, $j=0,\dots,k$, and $D$, are of the basic semi-algebraic form, the semi-definite program arising from sum-of-squares decompositions facilitates the gain of inner-approximations $\Omega$ of $\mathcal{R}_0$ via solving the relaxation of the derived optimal control problem in Subsection \ref{AA}.



\subsection{Characterization of $\mathcal{R}_0$}
\label{charac}
In this subsection, we firstly introduce the value function to characterize the maximal robust nontermination set $\mathcal{R}_0$ and then formulate it as a solution to a constrained optimal control problem.

For $\bm{x}_0\in \mathbb{R}^n$, the value function $V:\mathbb{R}^n\mapsto \mathbb{R}$ is defined by:
\begin{equation}
\label{vf}
V(\bm{x}_0):=\sup_{\pi\in \Pi}\sup_{l\in \mathbb{N}}\max_{j\in \{1,\ldots,n_0\}}\big\{h_{0,j}(\bm{x}_{\bm{x}_0}^{\pi}(l))\big\}.
\end{equation}
Note that $V(\bm{x}_0)$ may be neither continuous nor semi-continuous. (A function $V': X'\mapsto \mathbb{R}$ is lower semicontinuous iff for any $y\in \mathbb{R}$, $\{\bm{x}\in X'\mid V'(\bm{x})\geq y\}$ is open, e.g., \cite{Bourbaki2013}.)
%


The following theorem shows the relation between the value function $V$ and the maximal robust nontermination set $\mathcal{R}_0$, that is, the zero sublevel set of $V(\bm{x}_0)$ is equal to the maximal robust nontermination set $\mathcal{R}_0$.  
\begin{theorem}
\label{sets}
$\mathcal{R}_0=\{\bm{x}_0\in \mathbb{R}^n\mid V(\bm{x}_0)\leq 0\}$, where $\mathcal{R}_0$ is the maximal robust nontermination set as in Definition \ref{RNS}.
\end{theorem}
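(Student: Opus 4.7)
The plan is to prove the equality by double inclusion, unpacking the two definitions directly; the key observation is that $X_0$ is the sublevel set $\{\bm{x}\mid \max_{j\in\{1,\ldots,n_0\}} h_{0,j}(\bm{x})\leq 0\}$, so the conjunction of constraints defining $X_0$ can be replaced by a single ``max'' inequality. This reformulation is what links the set-theoretic statement ``the trajectory stays in $X_0$'' to the scalar statement ``$V(\bm{x}_0)\leq 0$.'' I would also note at the outset that the one-step map $\bm{f}(\bm{x},\bm{d})=\sum_i 1_{X_i}\bm{f}_i(\bm{x},\bm{d})$ is defined on all of $\mathbb{R}^n\times D$ because $\{X_1,\ldots,X_k\}$ partitions $\mathbb{R}^n$, so the sequence $\bm{x}_{\bm{x}_0}^{\pi}(\cdot)$ defined by the recursion in \eqref{path} makes sense for every $\bm{x}_0$ and every $\pi\in\Pi$, even when it leaves $X_0$; this is needed to interpret the supremum in \eqref{vf} uniformly.

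For the inclusion $\mathcal{R}_0\subseteq\{\bm{x}_0\mid V(\bm{x}_0)\leq 0\}$, I would pick $\bm{x}_0\in\mathcal{R}_0$ and apply Definition \ref{cp}: for every $\pi\in\Pi$ and every $l\in\mathbb{N}$ one has $\bm{x}_{\bm{x}_0}^{\pi}(l)\in X_0$, hence $h_{0,j}(\bm{x}_{\bm{x}_0}^{\pi}(l))\leq 0$ for every $j\in\{1,\ldots,n_0\}$, and therefore $\max_j h_{0,j}(\bm{x}_{\bm{x}_0}^{\pi}(l))\leq 0$. Taking the two suprema in \eqref{vf} preserves the non-positivity, yielding $V(\bm{x}_0)\leq 0$.

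For the reverse inclusion, assume $V(\bm{x}_0)\leq 0$. Since $V(\bm{x}_0)$ is defined as a supremum, the inequality propagates inward: for every $\pi\in\Pi$, every $l\in\mathbb{N}$, and every $j\in\{1,\ldots,n_0\}$, we get $h_{0,j}(\bm{x}_{\bm{x}_0}^{\pi}(l))\leq 0$. By the defining conjunction of $X_0$ this means $\bm{x}_{\bm{x}_0}^{\pi}(l)\in X_0$ for every $\pi$ and every $l$, which is exactly the condition in Definition \ref{cp} for $\bm{x}_0$ to belong to $\mathcal{R}_0$.

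I do not expect any real obstacle: the theorem is essentially a tautological reformulation once the max-representation of $X_0$ and the unrestricted extension of the dynamics are in hand. The only point that deserves explicit mention, rather than genuine work, is the measurability/well-definedness of the extended trajectory outside $X_0$, which is a consequence of the partition hypothesis on $\{X_1,\ldots,X_k\}$; everything else is definitional.
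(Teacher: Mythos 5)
Your proof is correct and follows essentially the same route as the paper's: both directions are obtained by unpacking Definition \ref{cp} and the definition \eqref{vf} of $V$, using that $X_0$ is exactly the set where $\max_j h_{0,j}\leq 0$. Your additional remarks on the well-definedness of the trajectory outside $X_0$ are a reasonable (if not strictly necessary) elaboration that the paper leaves implicit.
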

\begin{proof}
Let $\bm{y}_0\in \mathcal{R}_0$.  According to Definition~\ref{RNS}, we have that
\begin{equation}
\label{ineq}
\forall i\in \mathbb{N}.~\forall \pi\in \Pi.~\forall j\in \{1,\ldots,n_0\}.~h_{0,j}(\bm{x}_{\bm{y}_0}^{\pi}(i)) \leq 0
\end{equation}
holds.
Therefore, $V(\bm{y}_0)\leq 0$ and thus $\bm{y}_0\in \{\bm{x}_0\mid V(\bm{x}_0)\leq 0\}$.

On the other side, if $\bm{y}_0\in \{\bm{x}_0\in \mathbb{R}^n\mid V(\bm{x}_0)\leq 0\}$, then  $V(\bm{y}_0)\leq 0$, implying that \eqref{ineq} holds. Therefore, $\bm{y}_0\in \mathcal{R}_0$.

This concludes that $\mathcal{R}_0=\{\bm{x}_0\in \mathbb{R}^n\mid V(\bm{x}_0)\leq 0\}$.
\end{proof}

From Theorem \ref{sets},  the maximal robust nontermination set $\mathcal{R}_0$ could be constructed by computing $V(\bm{x}_0)$, which satisfies the dynamic programming principle as presented in Lemma \ref{dp}.
\begin{lemma}
\label{dp}
 For $\forall \bm{x}_0\in \mathbb{R}^n$ and $\forall l\in\mathbb{N}$, we have:
\begin{equation}
\label{dp1}
\begin{split}
V(\bm{x}_0)=\sup_{\pi\in \Pi}\max\big\{&V(\bm{x}_{\bm{x}_0}^{\pi}(l)),\sup_{i\in [0,l)\cap\mathbb{N}}\max_{j\in \{1,\ldots,n_0\}} h_{0,j}(\bm{x}_{\bm{x}_0}^{\pi}(i))\big\}.
\end{split}
\end{equation}
\end{lemma}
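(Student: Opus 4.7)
The plan is to establish \eqref{dp1} as a double inequality, using only shift invariance of the dynamics \eqref{path}. The one structural fact I need is the following \emph{shift identity}: for any $\pi\in\Pi$ and $l\in\mathbb{N}$, if we define the shifted policy $\sigma_l\pi\in\Pi$ by $(\sigma_l\pi)(k):=\bm{\pi}(k+l)$, then
\[
\bm{x}_{\bm{x}_0}^{\pi}(k+l)\;=\;\bm{x}_{\bm{x}_{\bm{x}_0}^{\pi}(l)}^{\sigma_l\pi}(k)\qquad\text{for every }k\in\mathbb{N}.
\]
This follows by induction on $k$ from the one-step recursion \eqref{path}: at $k=0$ both sides equal $\bm{x}_{\bm{x}_0}^\pi(l)$, and the inductive step uses the same $\bm{f}$ with the same input $\bm{\pi}(k+l)$.

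For the $\leq$ half of \eqref{dp1}, I would fix $\pi\in\Pi$ and split the supremum in \eqref{vf} along $\mathbb{N}=\{0,\dots,l-1\}\cup\{l,l+1,\dots\}$. Re-indexing the tail by $k=i-l$ and applying the shift identity rewrites $\sup_{i\geq l}\max_j h_{0,j}(\bm{x}_{\bm{x}_0}^\pi(i))$ as $\sup_{k\in\mathbb{N}}\max_j h_{0,j}(\bm{x}_{\bm{x}_{\bm{x}_0}^\pi(l)}^{\sigma_l\pi}(k))$, which, since $\sigma_l\pi\in\Pi$, is bounded above by $V(\bm{x}_{\bm{x}_0}^\pi(l))$. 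Hence for every $\pi$,
\[
\sup_{i\in\mathbb{N}}\max_j h_{0,j}(\bm{x}_{\bm{x}_0}^\pi(i))\;\leq\;\max\!\Big\{V(\bm{x}_{\bm{x}_0}^\pi(l)),\,\sup_{0\leq i<l}\max_j h_{0,j}(\bm{x}_{\bm{x}_0}^\pi(i))\Big\},
\]
and taking $\sup_{\pi\in\Pi}$ on the left recovers one direction.

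For the $\geq$ half I would pick arbitrary $\pi,\tilde\pi\in\Pi$ and concatenate them at time $l$: define $\pi^{\star}(i):=\bm{\pi}(i)$ for $i<l$ and $\pi^{\star}(i):=\tilde{\bm{\pi}}(i-l)$ for $i\geq l$. Since $D$ is the common codomain, $\pi^{\star}\in\Pi$. The recursion \eqref{path} gives $\bm{x}_{\bm{x}_0}^{\pi^{\star}}(i)=\bm{x}_{\bm{x}_0}^{\pi}(i)$ for $0\leq i\leq l$, and the shift identity applied to $\pi^{\star}$ with shift $l$ gives $\bm{x}_{\bm{x}_0}^{\pi^{\star}}(l+k)=\bm{x}_{\bm{x}_{\bm{x}_0}^{\pi}(l)}^{\tilde\pi}(k)$ for all $k$. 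Substituting this single policy $\pi^{\star}$ into the definition \eqref{vf} yields
\[
V(\bm{x}_0)\;\geq\;\max\!\Big\{\sup_{0\leq i<l}\max_j h_{0,j}(\bm{x}_{\bm{x}_0}^{\pi}(i)),\,\sup_{k\in\mathbb{N}}\max_j h_{0,j}(\bm{x}_{\bm{x}_{\bm{x}_0}^{\pi}(l)}^{\tilde\pi}(k))\Big\}.
\]
Taking $\sup_{\tilde\pi\in\Pi}$ turns the second entry into $V(\bm{x}_{\bm{x}_0}^{\pi}(l))$, and then taking $\sup_{\pi\in\Pi}$ produces the reverse inequality.

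The only delicate point is the coupling between the two suprema on the right-hand side of \eqref{dp1}: a single $\pi$ simultaneously controls the prefix trajectory \emph{and} the state $\bm{x}_{\bm{x}_0}^\pi(l)$ plugged into $V$. The concatenation step is what disentangles them, because once the prefix $(\bm{\pi}(0),\ldots,\bm{\pi}(l-1))$ is fixed the tail is still free to range over all of $\Pi$ independently; no measurability or continuity hypothesis on $\bm{f}_i$ or on $D$ is used. The boundary case $l=0$ is trivial, since the prefix supremum is vacuous and the identity collapses to $V(\bm{x}_0)=V(\bm{x}_0)$.
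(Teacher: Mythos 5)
Your proof is correct and follows essentially the same route as the paper's: both directions hinge on splitting the time horizon at $l$, shifting the policy tail to bound it by $V(\bm{x}_{\bm{x}_0}^{\pi}(l))$, and concatenating a prefix with an arbitrary tail policy for the reverse inequality. The only difference is presentational --- you manipulate the suprema directly to get exact inequalities, whereas the paper selects $\epsilon_1$-optimal policies and concludes via $|W(l,\bm{x}_0)-V(\bm{x}_0)|\leq 2\epsilon_1$; your bookkeeping is cleaner but the underlying argument is the same.
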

\begin{proof}
Let
\begin{equation}
\begin{split}
W(l,\bm{x}_0):=\sup_{\pi\in \Pi}&\max\big\{V(\bm{x}_{\bm{x}_0}^{\pi}(l)),\sup_{i\in [0,l)\cap \mathbb{N}}\max_{j\in \{1,\ldots,n_0\}} h_{0,j}(\bm{x}_{\bm{x}_0}^{\pi}(i))\big\}.
\end{split}
\end{equation}
We will prove that for $\epsilon>0$, $|W(l,\bm{x}_0)-V(\bm{x}_0)|<\epsilon$.

According to the definition of $V(\bm{x}_0)$, i.e. \eqref{vf}, for any $\epsilon_1$, there exists an input policy  $\pi'$ such that
\[V(\bm{x}_0)\leq \sup_{i\in \mathbb{N}}\max_{j\in \{1,\ldots,n_0\}}\{h_{0,j}(\bm{x}_{\bm{x}_0}^{\pi'}(i))\}+\epsilon_1.\]

We then introduce two infinite uncertain input policies $\pi_1$ and $\pi_2$ such that
$\pi_1=\{\bm{\pi}_1(i),i\in \mathbb{N}\}$ with $\bm{\pi}_1(j)=\bm{\pi}'(j)$ for $j=0,\ldots,l-1$ and $\pi_2=\{\bm{\pi}_2(i),i\in \mathbb{N}\}$ with $\bm{\pi}(j) =\bm{\pi}'(j+l)$ $\forall j\in \mathbb{N}$.
 Now, let $\bm{y}\in \bm{x}_{\bm{x}_0}^{\pi_1}(l)$, then we obtain that
\oomit{ define $\pi_1^{l_0-1}$ as the restriction of $\pi'$ over $[0,l_0)\cap \mathbb{N}$ and $\pi_2$ as the restriction of $\pi'$ over $[l_0,+\infty)\cap \mathbb{N}$, i.e. $\pi_1^{l_0-1}=\{\bm{d}'(0),\ldots,\bm{d}'(l_0-1)\}$ and $\pi_2=\{\bm{d}_2(l), l=0,\ldots,+\infty)$ with $\bm{d}_2(l)=\bm{d}'(l+l_0)$, and }
\begin{equation*}
\label{w}
\begin{split}
&W(l,\bm{x}_0)\geq\max\big\{V(\bm{y}),\sup_{i\in [0,l)\cap \mathbb{N}}\max_{j\in \{1,\ldots,n_0\}} h_{0,j}(\bm{x}_{\bm{y}}^{\pi_1}(i))\big\}\\
&\geq \max\big\{\sup_{i\in [l,+\infty)\cap \mathbb{N}}\max_{j\in \{1,\ldots,n_0\}}\{h_{0,j}(\bm{x}_{\bm{x}_0}^{\pi_2}(i-l))\},\sup_{i\in [0,l)\cap\mathbb{N}}\max_{j\in \{1,\ldots,n_0\}}\{h_{0,j}(\bm{x}_{\bm{x}_0}^{\pi_1}(i))\}\big\}\\
&=\max\big\{\sup_{i\in [l,+\infty)\cap \mathbb{N}}\max_{j\in \{1,\ldots,n_0\}}\{h_{0,j}(\bm{x}_{\bm{x}_0}^{\pi'}(i))\},\sup_{i\in [0,l)\cap \mathbb{N}}\max_{j\in \{1,\ldots,n_0\}} \{h_{0,j}(\bm{x}_{\bm{x}_0}^{\pi'}(i))\}\big\}\\
&=\sup_{i\in \mathbb{N}}\max_{j\in \{1,\ldots,n_0\}}\{h_{0,j}(\bm{x}_{\bm{x}_0}^{\pi'}(i))\}\\
&\geq V(\bm{x}_0)-\epsilon_1.
\end{split}
\end{equation*}
Therefore,
\begin{equation}
\label{geq}
V(\bm{x}_0)\leq W(l, \bm{x}_0)+\epsilon_1.
\end{equation}

On the other hand, for any $\epsilon_1>0$, there exists a $\pi_1\in \Pi$ 
such that $W(l, \bm{x}_0)\leq \max\big\{V(\bm{x}_{\bm{x}_0}^{\pi_1}(l)),\sup_{i\in [0,l)\cap \mathbb{N}}\max_{j\in \{1,\ldots,n_0\}} \{h_{0,j}(\bm{x}_{\bm{x}_0}^{\pi_1}(i))\}\big\}+\epsilon_1$, by the definition of $W(l, \bm{x}_0)$.
Also, by the definition of $V(\bm{x}_0)$, i.e. \eqref{vf}, for any $\epsilon_1>0$, there exists a $\pi_2$ such that
\[ V(\bm{y})\leq \sup_{i\in \mathbb{N}}\max_{j\in \{1,\ldots,n_0\}} \{h_{0,j}(\bm{x}_{\bm{y}}^{\pi_2}(i))\}+\epsilon_1,\]
where $\bm{y}=\bm{x}_{\bm{x}_0}^{\pi_1}(l)$.
  We define $\pi\in \Pi$ such that $\bm{\pi}(i) = \bm{\pi}_1(i)$ for $i=0,\ldots,l-1$ and $\bm{\pi}(i+l)=\bm{\pi}_2(i)$ for $\forall i\in \mathbb{N}$.
Then, it follows
\begin{equation}
\label{leq}
\begin{split}
W(l, \bm{x}_0)&\leq 2\epsilon_1+\max\{\sup_{i\in \mathbb{N}\cap [l,\infty)}\max_{j\in \{1,\ldots,n_0\}} \{h_{0,j}(\bm{x}_{\bm{y}}^{\pi_2}(i-l))\}, \\
   & \quad \quad \quad \quad \sup_{i\in [0,l)\cap \mathbb{N}}\max_{j\in \{1,\ldots,n_0\}}\{ h_{0,j}(\bm{x}_{\bm{x}_0}^{\pi_1}(i))\}\}\\
&\leq \sup_{i\in [0,+\infty)\cap \mathbb{N}}\max_{j\in \{1,\ldots,n_0\}} \{h_{0,j}(\bm{x}_{\bm{x}_0}^{\pi}(i))\}+2\epsilon_1\\
&\leq V(\bm{x}_0) +2\epsilon_1.
\end{split}
\end{equation}

Combining \eqref{geq} and \eqref{leq}, we finally have $|V(\bm{x}_0)-W(l,\bm{x}_0)|\leq \epsilon=2\epsilon_1$, implying that $V(\bm{x}_0)=W(l,\bm{x}_0)$ since $\epsilon_1$ is arbitrary. This completes the proof.
\end{proof}

Based on Lemma \ref{dp} stating that the value function $V(\bm{x}_0)$ complies with the dynamic programming principle \eqref{dp}, we derive a central equation of this paper, 
which is formulated formally in Theorem \ref{equations}.
\begin{theorem}
\label{equations}
The value function $V(\bm{x}_0):\mathbb{R}^n\mapsto \mathbb{R}$ in \eqref{vf} is a  solution to the equation
\begin{equation}
\label{eq}
\begin{split}
\min\big\{\inf_{\bm{d}\in D}(&V(\bm{x}_0)-V(\bm{f}(\bm{x}_0,\bm{d}))),V(\bm{x}_0)-\max_{j\in \{1,\ldots,n_0\}} h_{0,j}(\bm{x}_0)\big\}=0.\\
\end{split}
\end{equation}
\end{theorem}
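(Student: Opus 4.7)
The plan is to specialize Lemma~\ref{dp} to the one-step horizon $l=1$ and then extract equation~\eqref{eq} by a purely order-theoretic manipulation. Setting $l=1$ in \eqref{dp1} collapses the inner range $[0,l)\cap\mathbb{N}$ to the single index $i=0$, at which $\bm{x}_{\bm{x}_0}^{\pi}(0)=\bm{x}_0$ is independent of $\pi$, while $\bm{x}_{\bm{x}_0}^{\pi}(1)=\bm{f}(\bm{x}_0,\bm{\pi}(0))$ depends on $\pi$ only through its first action $\bm{\pi}(0)\in D$. Consequently the supremum over $\pi\in\Pi$ reduces to a supremum over $\bm{d}\in D$, and the recursion becomes
\[
V(\bm{x}_0) \;=\; \max\!\Bigl\{\sup_{\bm{d}\in D}V(\bm{f}(\bm{x}_0,\bm{d})),\; \max_{j\in\{1,\ldots,n_0\}} h_{0,j}(\bm{x}_0)\Bigr\}.
\]

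From this identity I would read off both directions of \eqref{eq}. For the inequality ``$\geq 0$'': the outer $\max$ dominates each argument, so $V(\bm{x}_0)\geq\sup_{\bm{d}\in D}V(\bm{f}(\bm{x}_0,\bm{d}))$ and $V(\bm{x}_0)\geq\max_j h_{0,j}(\bm{x}_0)$. Rewriting the first as $\inf_{\bm{d}\in D}\bigl(V(\bm{x}_0)-V(\bm{f}(\bm{x}_0,\bm{d}))\bigr)\geq 0$ (pulling $V(\bm{x}_0)$ out of the infimum and flipping sup into $-\inf$) and keeping the second as is, the minimum in \eqref{eq} is nonnegative. For the reverse inequality ``$\leq 0$'': since the outer $\max$ equals $V(\bm{x}_0)$, at least one of the two arguments must attain the value $V(\bm{x}_0)$. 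If the first does then $\inf_{\bm{d}\in D}\bigl(V(\bm{x}_0)-V(\bm{f}(\bm{x}_0,\bm{d}))\bigr)=0$; if the second does then $V(\bm{x}_0)-\max_j h_{0,j}(\bm{x}_0)=0$. In either case the minimum in \eqref{eq} is at most zero, hence equals zero.

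I do not foresee a serious obstacle: the argument relies only on the dynamic programming principle already established in Lemma~\ref{dp} and on elementary manipulations of $\sup$, $\inf$, and $\max$. The one subtle point worth spelling out carefully is the reduction of $\sup_{\pi\in\Pi}$ to $\sup_{\bm{d}\in D}$ in the term $V(\bm{x}_{\bm{x}_0}^{\pi}(1))$: this is justified because $\bm{x}_{\bm{x}_0}^{\pi}(1)$ depends on the policy $\pi$ solely through the first input $\bm{\pi}(0)$, while $\max_j h_{0,j}(\bm{x}_0)$ is policy-independent, so $\max$ distributes cleanly across the supremum. Since no continuity or regularity of $V$ is invoked, the proof goes through even though $V$ may fail to be semicontinuous, as remarked after \eqref{vf}.
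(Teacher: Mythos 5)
Your proof is correct and follows exactly the route the paper intends: the paper's own proof of Theorem~\ref{equations} is the single sentence that \eqref{eq} is ``derived from \eqref{dp1} when $l=1$,'' and your argument simply fills in the details of that specialization (collapsing the policy supremum to a supremum over $\bm{d}\in D$ and unpacking the $\max$). No discrepancy to report.
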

\begin{proof}
It is evident that \eqref{eq} is derived from \eqref{dp1} when $l=1$.
\end{proof}

According to Theorem \ref{equations}, we conclude that \textit{if there does not exist a solution to \eqref{eq}, the robust nontermination set $\mathcal{R}_0$ is empty.} Moreover, according to Theorem \ref{equations}, $V(\bm{x}_0)$ as defined in \eqref{vf} is a solution to \eqref{eq}.
Note that the solution to \eqref{eq} may be not unique, and we do not go deeper into this matter in this paper. However, any solution to \eqref{eq} forms an inner-approximation of the maximal robust nontermination set, as stated in Corollary \ref{upper}.
\begin{corollary}
\label{upper}
For any function $u(\bm{x}_0): \mathbb{R}^n \mapsto \mathbb{R}$ satisfying \eqref{eq}, $\{\bm{x}_0\in \mathbb{R}^n\mid u(\bm{x}_0)\leq 0\}$ is an inner-approximation of the maximal robust nontermination set $\mathcal{R}_0$, i.e. $\{\bm{x}_0\in \mathbb{R}^n\mid u(\bm{x}_0)\leq 0\}\subset \mathcal{R}_0$.
\end{corollary}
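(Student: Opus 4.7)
The plan is to show that any $u$ satisfying \eqref{eq} gives rise to a robust positively invariant sublevel set that is contained in $X_0$, and then invoke Definition \ref{RNS} directly. The key observation is that the equation \eqref{eq} unpacks into two simultaneous inequalities for $u$, namely
\begin{equation*}
u(\bm{x}_0) \geq u(\bm{f}(\bm{x}_0,\bm{d})) \quad \text{for all } \bm{d}\in D,
\end{equation*}
and
\begin{equation*}
u(\bm{x}_0) \geq \max_{j\in\{1,\ldots,n_0\}} h_{0,j}(\bm{x}_0),
\end{equation*}
which follow because the minimum of two quantities equals $0$ forces each of them to be non-negative. The first inequality says that the sublevel set $\{u\leq 0\}$ is forward-invariant under $\bm{f}(\cdot,\bm{d})$ uniformly in $\bm{d}$; the second says that $\{u\leq 0\}$ is contained in $X_0$.

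I would then fix an arbitrary $\bm{x}_0\in\{u\leq 0\}$ and an arbitrary input policy $\pi\in\Pi$, and prove by induction on $l\in\mathbb{N}$ that $u(\bm{x}_{\bm{x}_0}^{\pi}(l))\leq 0$. The base case $l=0$ is the hypothesis on $\bm{x}_0$. For the inductive step, apply the first inequality with $\bm{d}=\bm{\pi}(l)$ and the recursion \eqref{path} to conclude $u(\bm{x}_{\bm{x}_0}^{\pi}(l+1))\leq u(\bm{x}_{\bm{x}_0}^{\pi}(l))\leq 0$. Combined with the second inequality, this gives $h_{0,j}(\bm{x}_{\bm{x}_0}^{\pi}(l))\leq 0$ for every $j$ and every $l$, i.e., $\bm{x}_{\bm{x}_0}^{\pi}(l)\in X_0$ for all $l\in\mathbb{N}$.

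Since $\pi\in\Pi$ and $l\in\mathbb{N}$ were arbitrary, Definition \ref{RNS} yields $\bm{x}_0\in\mathcal{R}_0$, which establishes $\{u\leq 0\}\subseteq\mathcal{R}_0$. There is no real obstacle here beyond being careful that the decomposition of \eqref{eq} into the two inequalities above is exactly what is needed; the induction itself is routine once those two ingredients are in place. Alternatively, one could observe that this corollary is essentially a comparison statement: any $u$ satisfying \eqref{eq} pointwise dominates the value function $V$ on $\{u\leq 0\}$ (since $V$ is the smallest function captured by the dynamic programming principle), and then appeal to Theorem \ref{sets}; however, the direct induction above is more transparent and avoids uniqueness considerations that the authors explicitly leave open.
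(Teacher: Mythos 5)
Your proposal is correct and follows essentially the same route as the paper's proof: both decompose \eqref{eq} into the two pointwise inequalities $u(\bm{x}_0)\geq u(\bm{f}(\bm{x}_0,\bm{d}))$ and $u(\bm{x}_0)\geq \max_j h_{0,j}(\bm{x}_0)$ (the paper's \eqref{upper1}), then propagate $u\leq 0$ along an arbitrary trajectory to conclude $h_{0,j}(\bm{x}_{\bm{x}_0}^{\pi}(l))\leq u(\bm{x}_{\bm{x}_0}^{\pi}(l))\leq u(\bm{x}_0)\leq 0$ for all $l$ and $j$. Your explicit induction is just a cleaner writing of the monotonicity chain the paper states directly.
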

\begin{proof}
Let $u(\bm{x}_0): \mathbb{R}^n \mapsto \mathbb{R}$ be a solution to \eqref{eq}. It is evident that
 $u(\bm{x}_0)$ satisfies the constraints:
\begin{equation}
\label{upper1}
\left\{
\begin{array}{lll}
u(\bm{x}_0)-u(\bm{f}(\bm{x}_0,\bm{d}))\geq 0, &\forall \bm{x}_0\in \mathbb{R}^n, \forall \bm{d}\in D,\\
u(\bm{x}_0)-h_{0,j}(\bm{x}_0)\geq 0, &\forall \bm{x}_0\in \mathbb{R}^n, \forall j\in \{1,\ldots,n_0\}\\
\end{array}
\right.
\end{equation}

Assume $\bm{x}'_0\in \{\bm{x}_0\mid u(\bm{x}_0)\leq 0\}$.
According to \eqref{upper1}, we have that for $\forall \pi\in \Pi$, $\forall l \in \mathbb{N}$ and $\forall j \in\{1,\ldots,n_0\}$,
\begin{equation}
\begin{cases}
u(\bm{x}_{\bm{x}'_0}^{\pi}(l+1))&\leq u(\bm{x}_{\bm{x}'_0}^{\pi}(l))\leq u(\bm{x}'_0)\\
h_{0,j}(\bm{x}_{\bm{x}'_0}^{\pi}(l))&\leq u(\bm{x}_{\bm{x}'_0}^{\pi}(l))\leq u(\bm{x}'_0)
\end{cases}.
\end{equation}
Therefore, $\sup_{l\in \mathbb{N}}\max_{j\in \{1,\ldots,n_0\}} \{h_{0,j}(\bm{x}_{\bm{x}'_0}^{\pi}(l))\}\leq u(\bm{x}'_0)\leq 0$, implying that $\bm{x}'_0\in \mathcal{R}_0$. Thus, $\{\bm{x}_0 \in\mathbb{R}^n \mid u(\bm{x}_0)\leq 0\}\subset \mathcal{R}_0$.
\end{proof}

 From Corollary \ref{upper}, it is clear that an approximation of $\mathcal{R}_0$ from inside, i.e. a robust nontermination set, is able to be constructed by addressing \eqref{eq}. The solution to \eqref{eq} could be addressed by grid-based numerical methods such as level set methods \cite{fedkiw2002,mitchell2005}, which are a popular method for interface capturing. Such grid-based methods are prohibitive for systems of dimension greater than four without relying upon specific system structure. Besides, we observe that a robust nontermination set could be searched by solving \eqref{upper1} rather than \eqref{eq}. 
 In the subsection that follows we relax \eqref{upper1} as a sum-of-squares decomposition problem in a semidefinite programming formulation when in \textbf{Program} \ref{alg}, $\bm{f}_i$s
 are polynomials over $\bm{x}$ and $\bm{d}$, state and uncertain input constraints, i.e. $X_j$s and $D$s, are restricted to basic semi-algebraic sets.

\subsection{Semi-definite Programming Implementation}
\label{AA}
In practice, it is non-trivial to obtain a solution $V(\bm{x}_0)$ to \eqref{equations}, and thus non-trivial to gain $\mathcal{R}_0$. In this subsection, thanks to \eqref{upper1} and Corollary \ref{upper}, we present a semi-definite programming based method to solve \eqref{eq} approximately and construct a robust invariant $\Omega$ as presented in Definition \ref{RNS}  when Assumption \ref{assum} holds.
\begin{assumption}
\label{assum}
$\bm{f}_i$, $i=1,\ldots,k$, is polynomial over $\bm{x}$ and $\bm{d}$, $X_j$ and $D$, $j=0,\ldots,k$, are restricted to basic semi-algebraic sets in  \textbf{\bf{Program}} $\ref{alg}$.
\end{assumption}

Firstly, \eqref{upper1} has indicator functions on the expression $u(\bm{x}_0)-u(\bm{f}(\bm{x}_0,\bm{d}))$, which is beyond the capability of the solvers we use. We would like to obtain a constraint by removing indicators according to Lemma \ref{split}.
\begin{lemma}[\cite{ChenHWZ15}]
\label{split}
Suppose $\bm{f}'(\bm{x})=1_{F_1}\cdot\bm{f}'_1(\bm{x})+\cdots+1_{F_{k'}}\cdot \bm{f}'_{k'}(\bm{x})$ and $\bm{g}'(\bm{x})=1_{G_1}\cdot \bm{g}'_1(\bm{x})+\cdots+1_{G_{l'}}\cdot \bm{g}'_{l'}(\bm{x})$, where $\bm{x}\in \mathbb{R}^n$, $k',l'\in \mathbb{N}$, and $F_i, G_j\subseteq \mathbb{R}^n$, $i=1,\ldots,k'$, $j=1,\ldots,l'$. Also, $F_1,\ldots, F_{k'}$ and $G_1,\ldots,G_{l'}$ are respectively disjoint. Then, $\bm{f}'\leq \bm{g}'$ if and only if (pointwise)
\begin{equation}
\begin{split}
&\bigwedge_{i=1}^{k'}\bigwedge_{j=1}^{l'}\big[F_i\wedge G_j\Rightarrow \bm{f}'_i\leq \bm{g}'_j\big]\wedge\\
 &\quad\quad\quad\bigwedge_{i=1}^{k'} \big[F_i\wedge \big(\bigwedge_{j=1}^{l'} \neg G_j\big)\Rightarrow \bm{f}'_i\leq 0\big]\wedge\\
& \quad\quad\quad\quad\quad\quad\quad\bigwedge_{j=1}^{l'}\big[\big(\bigwedge_{i=1}^{k'}\neg F_i\big)\wedge G_j\Rightarrow 0\leq \bm{g}'_j\big].
\end{split}
\end{equation}
\end{lemma}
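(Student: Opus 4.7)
The plan is to prove both directions by a pointwise case analysis on which sets $F_i$ and $G_j$ contain a given point $\bm{x}\in\mathbb{R}^n$. The essential observation is that since $F_1,\ldots,F_{k'}$ are pairwise disjoint (and similarly for $G_1,\ldots,G_{l'}$), at any $\bm{x}$ the indicator decomposition of $\bm{f}'$ collapses to a single summand $\bm{f}'_i(\bm{x})$ if $\bm{x}\in F_i$ and to $0$ if $\bm{x}\notin\bigcup_i F_i$; and the analogous statement holds for $\bm{g}'$. This partitions $\mathbb{R}^n$ into four mutually exclusive regions according to membership in $\bigcup_i F_i$ and $\bigcup_j G_j$.

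For the $(\Leftarrow)$ direction, assume the three big conjunctions hold and fix an arbitrary $\bm{x}$. I would branch on which of the four regions contains $\bm{x}$: if $\bm{x}\in F_i\cap G_j$ for some (unique) pair $(i,j)$, then $\bm{f}'(\bm{x})=\bm{f}'_i(\bm{x})$ and $\bm{g}'(\bm{x})=\bm{g}'_j(\bm{x})$, and the first conjunct gives $\bm{f}'_i(\bm{x})\leq \bm{g}'_j(\bm{x})$; if $\bm{x}\in F_i$ but in no $G_j$, then $\bm{g}'(\bm{x})=0$ and the second conjunct gives $\bm{f}'_i(\bm{x})\leq 0$; symmetrically for the third conjunct when $\bm{x}$ lies in some $G_j$ but no $F_i$; and in the remaining region both sides are $0$. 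In each case, $\bm{f}'(\bm{x})\leq \bm{g}'(\bm{x})$.

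For the $(\Rightarrow)$ direction, assume $\bm{f}'\leq \bm{g}'$ pointwise. To verify each implication in the conjunction, I would pick an arbitrary $\bm{x}$ satisfying the antecedent of that implication and evaluate both $\bm{f}'$ and $\bm{g}'$ at $\bm{x}$: the disjointness hypothesis ensures the indicator sums collapse to a single term (or to $0$) exactly as dictated by the antecedent, so the assumed inequality $\bm{f}'(\bm{x})\leq \bm{g}'(\bm{x})$ reduces to the consequent of the implication. Applying this for every $(i,j)$, for every $i$ (with $\bm{x}\notin\bigcup_j G_j$), and for every $j$ (with $\bm{x}\notin\bigcup_i F_i$) yields all three conjuncts.

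There is no real obstacle here; the proof is essentially bookkeeping. The only point that deserves care is handling the implications vacuously when the antecedent region is empty (which happens, for example, if $F_i\cap G_j=\emptyset$), and verifying that the disjointness hypothesis truly lets one replace the indicator sum $\sum_i 1_{F_i}\bm{f}'_i(\bm{x})$ by the unique surviving summand at each $\bm{x}$. Since the statement is pointwise and the decomposition is by indicator functions over disjoint sets, both of these are immediate.
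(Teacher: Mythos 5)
Your proof is correct: the pointwise case analysis on the four regions determined by membership in $\bigcup_{i}F_i$ and $\bigcup_{j}G_j$, using disjointness to collapse each indicator sum to a single surviving summand (or to $0$), establishes both directions completely, including the vacuous cases. The paper itself gives no proof of this lemma --- it is imported from the cited reference and stated without argument --- so there is nothing to compare against; your argument is the standard one and fills that gap adequately.
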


Consequently, according to Lemma \ref{split}, the equivalent constraint without indicator functions of \eqref{upper1} is equivalently formulated below:
\begin{equation}
\label{upper2}
\begin{split}
&\bigwedge_{i=1}^{k}\big[\forall \bm{d}\in D.~\forall \bm{x}_0\in X_i.~ u(\bm{x}_0)-u( \bm{f}_i(\bm{x}_0,\bm{d}))\geq 0\big]\wedge \\
&\bigwedge_{j=1}^{n_0}\big[\forall \bm{x}_0\in \mathbb{R}^n.~u(\bm{x}_0)-h_{0,j}(\bm{x}_0)\geq 0\big].
\end{split}
\end{equation}




Before encoding \eqref{upper2} in sum-of-squares programming formulation, we
denote the set of sum of squares polynomials over variables $\bm{y}$ by $\mathtt{SOS}(\bm{y})$, i.e.
\[\mathtt{SOS}(\bm{y}):=\{p\in\mathbb{R}[\bm{y}]\mid p=\sum_{i=1}^r q_i^2, q_i\in \mathbb{R}[\bm{y}],i=1,\ldots,r\}.\]
Besides, we define the set $\Omega(X_0)$ of states being reachable from the set $X_0$ within one step computation, i.e.,
\begin{equation}
\label{re}
\Omega(X_0):=\{\bm{x}\mid \bm{x}=\bm{f}(\bm{x}_0,\bm{d}),\bm{x}_0\in X_0,\bm{d}\in D\}\cup X_0,
\end{equation}
which can be obtained by semi-definite programming or linear programming methods as in \cite{lasserre2015,magron2017}. Herein, we assume that it was already given. Consequently, when Assumption \ref{assum} holds and $u(\bm{x})$ in \eqref{upper2} is constrained to polynomial type and is restricted in a ball $B=\{\bm{x}\mid h(\bm{x})\geq 0\}$, where $h(\bm{x})= R-\sum_{i=1}^n x_i^2$ and $\Omega(X_0)\subseteq B$, \eqref{upper2} is relaxed as the following sum-of-squares programming problem:
\begin{equation}
\label{sos}
\begin{split}
&\min_{u,s_{i,l_1}^{X_i},s_{i,l_2}^D,s_{i,l},s'_{1,j}} \bm{c}'\cdot \bm{w}\\
&u(\bm{x})-u(\bm{f}_i(\bm{x},\bm{d}))+\sum_{l_1=1}^{n_i}s^{X_i}_{i,l_1}h_{i,l_1}(\bm{x})+\sum_{l_2=1}^{n_{k+1}}s^D_{i,l_2}h_{k+1,l}(\bm{d})-s_{i,1}h(\bm{x}) \in \mathtt{SOS}(\bm{x},\bm{d}),\\
&u(\bm{x})-h_{0,j}(\bm{x})-s'_{1,j}h(\bm{x})\in \mathtt{SOS}(\bm{x}),\\
&i=1,\ldots,k; j=1,\ldots,n_0,
\end{split}
\end{equation}
where $\bm{c}'\cdot \bm{w}=\int_{B}ud\mu(\bm{x})$,  $\bm{w}$ is the vector of the moments of the Lebesgue measure over $B$ indexded in the same basis in which the polynomial $u(\bm{x})\in \mathbb{R}_d[\bm{x}]$ with coefficients $\bm{c}$ is expressed, $s^{X_i}_{i,l_1}, s_{i,l_2}^D, s_{i,1}\in \mathtt{SOS}(\bm{x},\bm{d})$, $i=1,\ldots,k$, $l_1=1,\ldots,n_i$, $l_2=1,\ldots,n_{k+1}$, $s'_{1,j}\in \mathtt{SOS}(\bm{x})$, $j=1,\ldots,n_0$, are sum-of-squares polynomials of appropriate degree. The constraints that polynomials are sum-of-squares can be written explicitly as linear matrix inequalities, and the objective is linear in the coefficients of the polynomial $u(\bm{x})$; therefore problem \eqref{sos} is reformulated as an semi-definite program, which falls within the convex programming framework and can be solved via interior-points method in polynomial time (e.g., \cite{vandenberghe1996}). Note that the objective of \eqref{sos} facilitate the gain of the less conservative robust nontermination set.

The implementation based on the sum-of-squares program \eqref{sos} is sound but incomplete. Its soundness is presented in Theorem \ref{inner}.
\begin{theorem}[Soundness]
\label{inner}
Let $u(\bm{x})\in \mathbb{R}_d[\bm{x}]$ be solution to \eqref{sos}, then $\{\bm{x}\in B\mid u(\bm{x})\leq 0\}$ is an inner-approximation of $\mathcal{R}_0$, i.e., every possible run of \textbf{\bf{Program}} $\ref{alg}$ starting from a state
  in $\{\bm{x}\in B\mid u(\bm{x})\leq 0\}$ does not terminate.
\end{theorem}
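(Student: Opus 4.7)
The plan is to show that any polynomial $u$ satisfying the sum-of-squares constraints of \eqref{sos} satisfies the pointwise inequalities in \eqref{upper2} restricted to the ball $B$, and then to run a short induction that keeps the trajectory inside $B$ to conclude $\bm{x}'_0 \in \mathcal{R}_0$. Concretely, I would first recall that every polynomial in $\mathtt{SOS}$ is non-negative on the whole underlying space, so membership of the first expression in \eqref{sos} in $\mathtt{SOS}(\bm{x},\bm{d})$ gives, for all $\bm{x},\bm{d}$,
\begin{equation*}
u(\bm{x}) - u(\bm{f}_i(\bm{x},\bm{d})) \;\geq\; -\sum_{l_1} s^{X_i}_{i,l_1}\, h_{i,l_1}(\bm{x}) - \sum_{l_2} s^D_{i,l_2}\, h_{k+1,l_2}(\bm{d}) + s_{i,1}\, h(\bm{x}).
\end{equation*}
When $\bm{x} \in X_i \cap B$ and $\bm{d} \in D$, the SOS multipliers are non-negative while the $h_{i,l_1}$ and $h_{k+1,l_2}$ are non-positive and $h$ is non-negative, so the right-hand side is $\geq 0$. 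Hence $u(\bm{x}) \geq u(\bm{f}_i(\bm{x},\bm{d}))$ for every $i\in\{1,\ldots,k\}$, every $\bm{x}\in X_i\cap B$, and every $\bm{d}\in D$. The same argument applied to the second SOS condition yields $u(\bm{x}) \geq h_{0,j}(\bm{x})$ for all $\bm{x}\in B$ and every $j\in\{1,\ldots,n_0\}$.

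Next I would fix $\bm{x}'_0 \in B$ with $u(\bm{x}'_0)\leq 0$ and an arbitrary policy $\pi\in\Pi$, and show by induction on $l\in\mathbb{N}$ that $\bm{x}_{\bm{x}'_0}^{\pi}(l)\in B$, $u(\bm{x}_{\bm{x}'_0}^{\pi}(l))\leq u(\bm{x}'_0)\leq 0$, and $\bm{x}_{\bm{x}'_0}^{\pi}(l)\in X_0$. The base case $l=0$ follows from the hypothesis on $\bm{x}'_0$ together with the inequality $u\geq h_{0,j}$ on $B$, which forces $h_{0,j}(\bm{x}'_0)\leq 0$ for every $j$ and therefore $\bm{x}'_0\in X_0$. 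For the inductive step, the partition property of $\{X_1,\ldots,X_k\}$ places $\bm{x}_{\bm{x}'_0}^{\pi}(l)$ in a unique $X_i$; the first extracted inequality then yields $u(\bm{x}_{\bm{x}'_0}^{\pi}(l+1)) \leq u(\bm{x}_{\bm{x}'_0}^{\pi}(l))\leq 0$, the containment $\Omega(X_0)\subseteq B$ built into the definition of $B$ places $\bm{x}_{\bm{x}'_0}^{\pi}(l+1)\in B$, and $u\geq h_{0,j}$ on $B$ combined with $u(\bm{x}_{\bm{x}'_0}^{\pi}(l+1))\leq 0$ forces $\bm{x}_{\bm{x}'_0}^{\pi}(l+1)\in X_0$. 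Since $\pi$ and $l$ are arbitrary, Definition \ref{cp} delivers $\bm{x}'_0\in\mathcal{R}_0$, so $\{\bm{x}\in B\mid u(\bm{x})\leq 0\}\subseteq \mathcal{R}_0$.

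The main technical subtlety lies in keeping the induction inside $B$: the SOS certificates give the descent-type inequality $u(\bm{x})\geq u(\bm{f}_i(\bm{x},\bm{d}))$ only on $X_i\cap B$, so one must separately argue that forward iterates never escape $B$. This is exactly what the assumption $\Omega(X_0)\subseteq B$ provides, since $\bm{x}_{\bm{x}'_0}^{\pi}(l)\in X_0$ implies $\bm{x}_{\bm{x}'_0}^{\pi}(l+1) = \bm{f}(\bm{x}_{\bm{x}'_0}^{\pi}(l),\bm{\pi}(l))\in \Omega(X_0)\subseteq B$. Without this containment the descent inequality could fail at subsequent iterates and the inductive argument would break down. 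Apart from this bookkeeping, the remaining steps are routine consequences of the non-negativity of SOS polynomials and of the partition structure encoded in \textbf{Program} \ref{alg}; in particular, the argument does not need the full strength of Corollary \ref{upper} (whose inequality \eqref{upper1} is global in $\bm{x}$), because the SOS relaxation only certifies the inequality locally on $B$.
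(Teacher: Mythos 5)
Your proposal is correct and takes essentially the same approach as the paper: both extract the pointwise inequalities $u(\bm{x})\geq u(\bm{f}_i(\bm{x},\bm{d}))$ on $X_i\cap B\times D$ and $u(\bm{x})\geq h_{0,j}(\bm{x})$ on $B$ from the sum-of-squares certificates (the paper invokes the $\mathcal{S}$-procedure where you argue directly from non-negativity of SOS polynomials), and both then use $\Omega(X_0)\subseteq B$ to keep forward iterates inside $B$ so that $\{\bm{x}\in B\mid u(\bm{x})\leq 0\}$ is forward invariant and contained in $X_0$. The only difference is presentational: you run a direct induction on $l$, whereas the paper argues by contradiction via a first exit time $l_0$; your rendering is arguably the cleaner of the two, but the underlying argument is the same.
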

\begin{proof}
Since $u(\bm{x})$ satisfies the constraint in \eqref{sos}, we obtain that $u(\bm{x})$ satisfies according to $\mathcal{S}-$ procedure in \cite{boyd1994}:
\begin{align}
&\bigwedge_{i=1}^{k}\big[\forall \bm{d}\in D.~\forall \bm{x}\in X_i \cap B.~u(\bm{x})-u(\bm{f}_i(\bm{x},\bm{d}))\geq 0\big]\wedge\label{1}\\
&\bigwedge_{j=1}^{n_0}\big[\forall \bm{x}\in B.~u(\bm{x})-h_{0,j}(\bm{x})\geq 0\big].
\label{2}
\end{align}

Due to \eqref{1} and the fact that $\cup_{i=1}^k X_i=\mathbb{R}^n$, we obtain that for $\forall \bm{x}_0\in \{\bm{x}\in B\mid u(\bm{x})\leq 0\}$, $\exists i\in\{1,\ldots,k\}.~~\forall \bm{d}\in D.~~u(\bm{x}_0)-u(\bm{f}_i(\bm{x}_0,\bm{d}))\geq 0,$ implying that
\begin{equation}
\label{inequa}
u(\bm{x}_0)-u(\bm{f}(\bm{x}_0,\bm{d}))\geq 0, \forall \bm{d}\in D.
\end{equation}

Assume that there exist an initial state $\bm{y}_0\in \{\bm{x}\in B\mid u(\bm{x})\leq 0\}$ and an input policy $\pi'$ such that $\bm{x}_{\bm{y}_0}^{\pi'}(l)\in X_0$ does not hold for $\forall l\in \mathbb{N}$. Due to  the fact that \eqref{2} holds, we have the conclusion that $\{\bm{x}\in B\mid u(\bm{x})\leq 0\} \subset X_0$ and thus $\bm{y}_0\in X_0$.  Let $l_0\in \mathbb{N}$ be the first time making $\bm{x}_{\bm{y}_0}^{\pi'}(l)$ violate the constraint $X_0$, i.e., $\bm{x}_{\bm{y}_0}^{\pi'}(l_0)\notin X_0$ and $\bm{x}_{\bm{y}_0}^{\pi'}(l)\in X_0$ for $l=0,\ldots,l_0-1$. Also, since $\Omega(X_0)\subset B$, \eqref{inequa} and \eqref{2}, where $\Omega(X_0)$ is defined in \eqref{re}, we derive that $\bm{x}_{\bm{y}_0}^{\pi'}(l_0-1)\in \{\bm{x}\in B\mid u(\bm{x})\leq 0\}$ and $u(\bm{x}_{\bm{y}_0}^{\pi'}(l_0))>0$, which contradicts \eqref{inequa}. Thus, every possible run of \textbf{Problem} \ref{alg} initialized in $\{\bm{x}\in B\mid u(\bm{x})\leq 0\}$ will live in $\{\bm{x}\in B\mid u(\bm{x})\leq 0\}$ forever while respecting $X_0$.


Therefore,  the conclusion in Theorem \ref{inner} is justified.
\end{proof}



\section{Experiments}
\label{ex}
In this section we evaluate the performance of our method built upon  the semi-definite program \eqref{sos}. Examples \ref{ex1} and \ref{ex2} are constructed to illustarte the soundness of our method. Example \ref{ex3} is used to evaluate the scalability of our method in dealing with \textbf{Problem} 1 . The parameters that control the performance of our approach in applying \eqref{sos} to these three examples are presented in Table \ref{table}. All computations were performed on an i7-7500U 2.70GHz CPU with 32GB RAM running Windows 10. For numerical implementation, we formulate the sum of squares problem \eqref{sos} using the MATLAB package YALMIP\footnote{It can be downloaded from \url{https://yalmip.github.io/}.} \cite{lofberg2004} and use Mosek\footnote{For academic use, the software Mosek can be obtained free from \url{https://www.mosek.com/}.} \cite{mosek2015mosek} as a semi-definite programming solver.


\begin{table}[h!]
\begin{center}
\begin{tabular}{|l|r|r|r|r|r|r|}
  \hline
   Ex.&$d_h$&$d_{s_{i,l_1}^{X_i}}$&$d_{s_{i,l_2}^D}$&$d_{s_{i,l}}$&$d_{s'_{1,j}}$&$Time$\\\hline
   1&14&14&14&14 &14&11.30\\\hline
   1&16&16&16&16&16 &28.59\\\hline
   2&6&12&12&12&6 & 9.06\\\hline
   2&8&16&16&16&8 & 65.22\\\hline
   2&10&20&20&20&10 &123.95\\\hline
   2&12&24&24&24&12 &623.95\\\hline
   4&4&4&4&4&4&  58.56     \\\hline
   4&5&4&4&4&4&  60.02     \\\hline
   \end{tabular}
\end{center}
\caption{\textit{Parameters and performance of our implementations on the examples presented in this section.  $d_u, d_{s_{i,l_1}^{X_i}}, d_{s_{i,l_2}^D}, d_{s_{i,l}}, d_{s'_{1,j}}$: the degree of the polynomials $u, s_{i,l_1}^{X_i}, s_{i,l_2}^D, s_{i,l}, s'_{1,j}$ in \eqref{sos}, respectively, $i=1,\ldots,k$, $l_1=1,\ldots,n_i$, $l_2=1,\ldots,n_{k+1}$, $j=1,\ldots,n_0$; $Time$: computation times (seconds).} }
\label{table}
\end{table}

\begin{example}
\label{ex1}
This simple example is mainly constructed to illustrate the difference between \textbf{Program} \ref{alg} taking uncertain inputs into account and free of disturbances. In both cases, \textbf{Program} \ref{alg} is composed of a single loop without switch-case type in loop body, i.e. $k=1$ and $X_1=\mathbb{R}^2$.

In case that $\bm{f}_1(x,y)=(0.4x+0.6y;dx+0.9y)$, $X_0=\{(x,y)\mid  x^2+y^2-1\leq 0\}$ and $D=\{d\mid d^2-0.01\leq 0\}$ in \textbf{Program} \ref{alg}, the inner-approximations of the maximal robust nontermination set $\mathcal{R}_0$ are illustrated in Fig. \ref{fig-one-1}(Left) when $d_u=10$ and $d_u=12$. By visualizing the results in Fig. \ref{fig-one-1}, the inner-approximation obtained when $d_u=12$ does not improve the one when $d_u=10$ a lot. Although there is a gap between the inner-approximations obtained via our method and the set $\mathcal{R}_0$, it is not big. 

In the ideal implementation of \textbf{Program} \ref{alg}, that is, $d$ in the loop body is a fixed nominal value, there will exists some initial conditions such that \textbf{Program} \ref{alg} in the real implementation may violate the constraint set $X_0$, i.e.  \textbf{Program} \ref{alg} may terminate. We use $d=0$ as an instance to illustrate such situation. The difference between termination sets is visualized in Fig. \ref{fig-one-1}(Right). The robust nontermination set in case of $d\in [-0.1,0.1]$ is smaller than the nontermination set when $d=0$. Note that from Fig. \ref{fig-one-3}, we observe that the inner-approximation obtained by our method when $d_u=10$ can approximate $\mathcal{R}_0$ very well.

\begin{figure}[!h]
\begin{minipage}[t]{0.49\linewidth}
\center
\includegraphics[width=2.4in,height=1.6in]{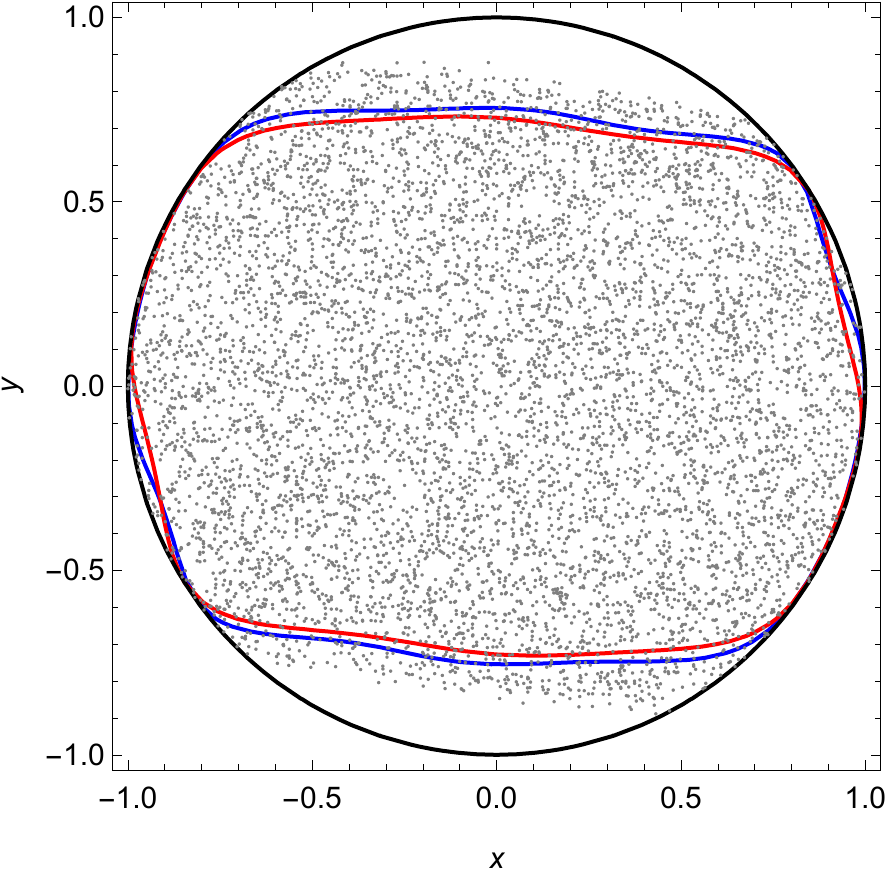}
\end{minipage}
\begin{minipage}[t]{0.49\linewidth}
\center
\includegraphics[width=2.5in,height=1.6in]{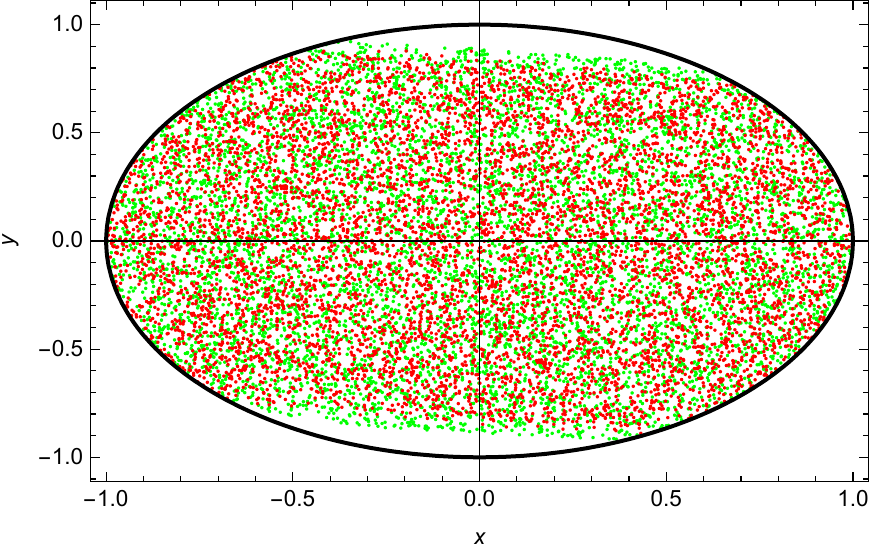}
\end{minipage}
\caption{Computed robust nontermination sets for Example \ref{ex1}.\small{Left: (Blue and Red curves -- the boundaries of the computed robust nontermination set $\mathcal{R}_0$ when $d_u=10$ and $d_u=12$, respectively; Gray points -- the approximated robust nontermination set via numerical simulation techniques; Black curve -- the boundary of $X_0$.) Right: (Green and red points -- the approximated (robust) nontermination sets via numerical simulation techniques for \textbf{Program} \ref{alg} without and with disturbance inputs, respectively; Black curve -- the boundary of $X_0$.)}}
\label{fig-one-1}
\end{figure}

\begin{figure}[!h]
\center
\begin{minipage}[t]{0.49\linewidth}
\includegraphics[width=2.5in,height=1.6in]{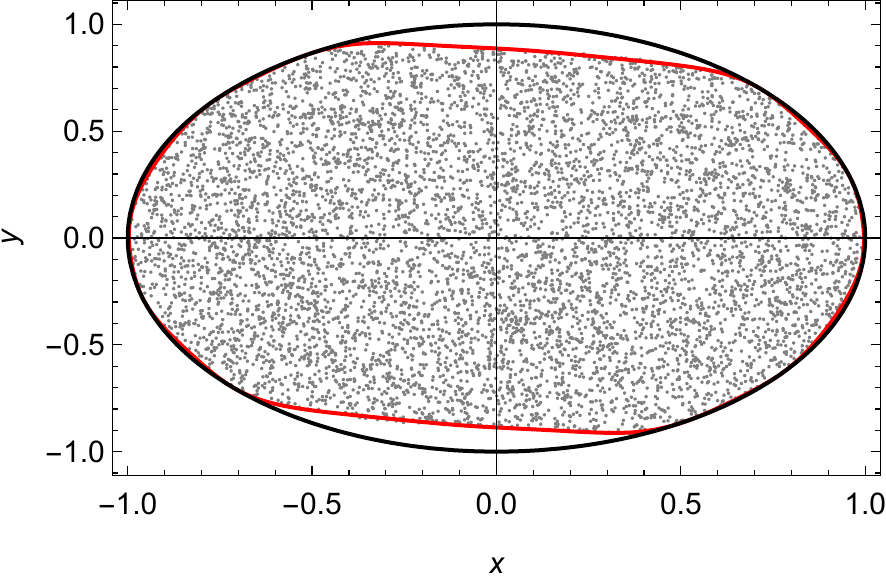}
\caption{Nontermination set estimation for Example \ref{ex1}. \small{(Black and Red curves: the boundaries of $X_0$ and the computed robust nontermination set $\mathcal{R}_0$ when $d_u=16$, respectively; Gray points -- the approximated robust nontermination set via numerical simulation techniques.)}}
\label{fig-one-3}
\end{minipage}
\begin{minipage}[t]{0.49\linewidth}
\includegraphics[width=2.5in,height=1.6in]{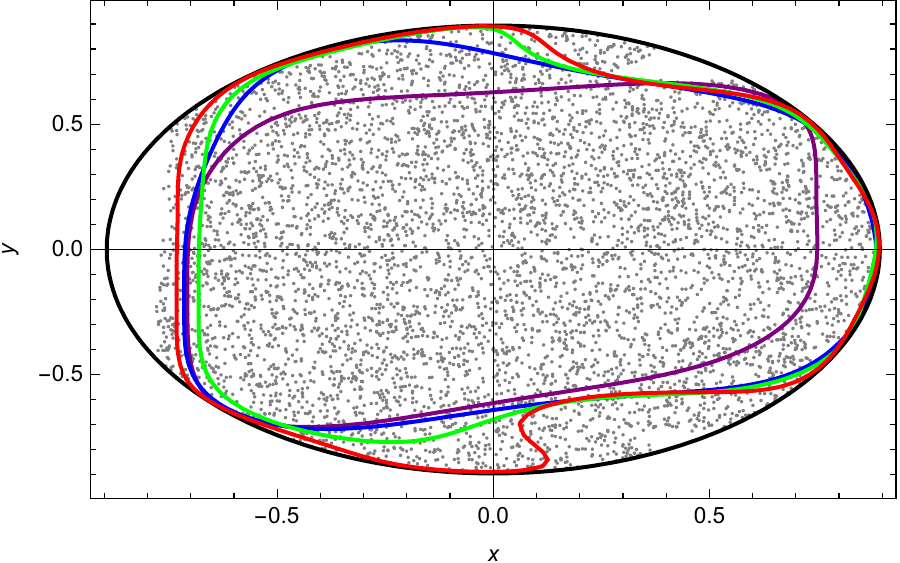}
\caption{Robust nontermination sets for Example \ref{ex2}. \small{Black, Purple, Blue, Green and Red curves: the boundaries of $X_0$ and the computed robust nontermination sets $\mathcal{R}_0$ when $d_u=6,8,10,12$, respectively; Gray points -- the approximated robust nontermination set via numerical simulation techniques.}}
\label{fig-one-4}
\end{minipage}
\end{figure}
\end{example}

\begin{example}
\label{ex2}
In this example we consider \textbf{Program} \ref{alg} with switch-case type in the loop body, where $\bm{f}_1(x,y)=(x;(0.5+d)x-0.1y)$, $\bm{f}_2(x,y)=(y;0.2x-(0.1+d)y+y^2)$, $X_0=\{(x,y)\mid  x^2+y^2-0.8\leq 0\}$, $X_1=\{(x,y)\mid 1-(x-1)^2-y^2\geq 0\}$, $X_2=\{(x,y)\mid -1+(x-1)^2+y^2<0\}$ and $D=\{d\mid d^2-0.01\leq 0\}$. The inner-approximations computed by solving \eqref{sos} when $d_u=8,10$ and $12$ respectively are illustrated in Fig. \ref{fig-one-4}. By comparing these results, we observe that polynomials of higher degree facilitate the gain of less conservative estimation of the set $\mathcal{R}_0$.


\end{example}

\begin{example}
\label{ex3}
In this example,  we consider \textbf{Program} \ref{alg} with seven variables $\bm{x}=(x_1,x_2,x_3,x_4,x_5,x_6,x_7)$ and illustrate the scalability of our approach. In \textbf{Program} \ref{alg}, $\bm{f}_1(\bm{x})=((0.5+d)x_1;0.8x_2;0.6x_3+0.1x_6;x_4;0.8x_5;0.1x_2+x_6;0.2x_2+0.6x_7);$, $\bm{f}_2(\bm{x})=(0.5x_1+0.1x_6;(0.5+d)x_2;x_3;0.1x_1+0.4x_4;0.2x_1+x_5;x_6;0.1x_1+x_7)$, $X_0=\{\bm{x}\mid  \sum_{i=1}^7x_i^2-1\geq 0\}$, $X_1=\{\bm{x}\mid x_1+x_2+x_3-x_4-x_5-x_6-x_7\geq 0\}$, $X_2=\{(x,y)\mid x_1+x_2+x_3-x_4-x_5-x_6-x_7<0\}$ and $D=\{d\mid d^2-0.01\leq 0\}$. From the computation times listed Table \ref{table}, we conclude that although the computation time increases with the number of variables increasing, our method may deal with problems with many variables, especially for the cases that the robust nontermination set formed by a polynomial of low degree fulfills certain needs in real applications. Note that numerical simulation techniques suffers from the curse of dimensionality and thus can not apply to this example since this example has seven variables, we just illustrate the results computed by our method based on \eqref{sos} in Fig. \ref{fig-one-5}.

\begin{figure}[!h]
\center
\includegraphics[width=3.5in,height=2.0in]{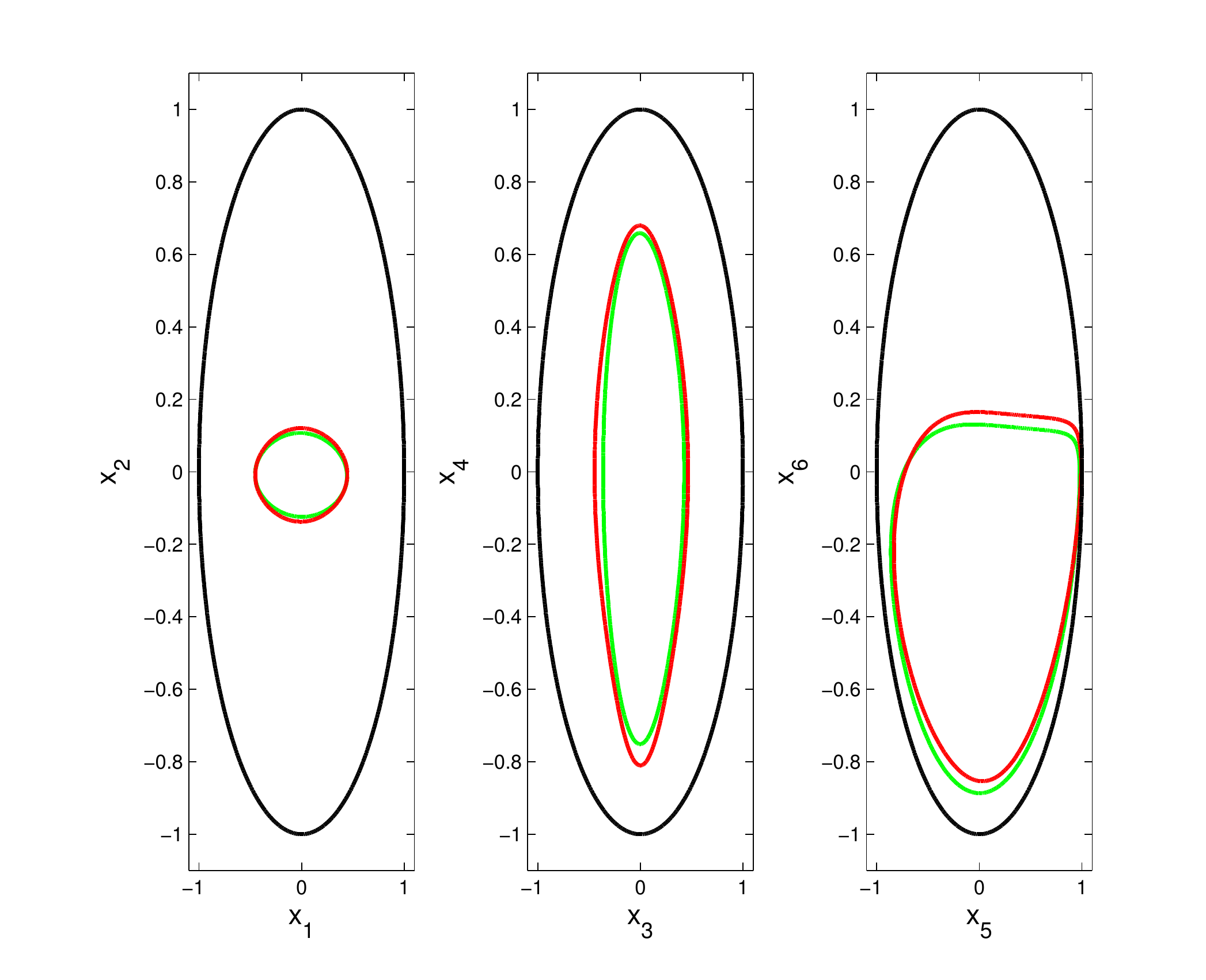}
\caption{Computed robust nontermination sets for Example \ref{ex3}.\small{(Black, Red and Green curves -- the boundaries of $X_0$ and the cross-sections (from left to right: $x_3=x_4=x_5=x_6=x_7=0$, $x_1=x_2=x_5=x_6=x_7=0$ and $x_1=x_2=x_3=x_4=x_7=0$)~of the computed robust nontermination sets $\mathcal{R}_0$ when $d_u=5$ and $d_u=4$, respectively.)}}
\label{fig-one-5}
\end{figure}

\end{example}

\section{Related Work}
\label{RW}
Methods for proving nontermination of programs have recently been studied actively.
\cite{gupta2008} uses a characterization of nontermination by recurrence sets of states that is visited infinitely often along the path. A recurrence set exists iff a program is non-terminating. To find recurrence sets they provide a method based on constraint solving. Their method is only applicable to programs with linear integer arithmetic and does not support non-determinism and is implemented in the tool \textsc{Tnt}. \cite{Chen2014} proposes a method combining closed recurrence sets with counterexample-guided underapproximation for disproving termination. This method, implemented in the tool \textsc{T2},  relies heavily on suitable safety provers for the class of programs of interest, thus rendering an application of their method to nonlinear programs difficult. Further, \cite{cook2014}  introduces live abstractions combing with closed recurrence sets to disprove termination. However, this method, implemented in the tool \textsc{Anant}, is only suitable for disproving non-termination in the form of lasso for programs of finite control-flow graphs.

There are also some approaches exploiting theorem-proving techniques to prove nontermination, e.g., \cite{velroyen2008} presents a method for disproving non-termination of Java programs based on theorem proving and generation of invariants. This method is implemented in \textsc{Invel}, which is restricted to deterministic programs with unbounded integers and single loops. \textsc{Aprove} \cite{giesl2014} uses  SMT solving to prove nontermination of Java programs \cite{brockschmidt2011}. The application of this method requires either singleton recurrence sets or loop conditions being recurrence sets in the programs of interest. \cite{Larraz2014}  disproves termination based on MaxSMT-based invariant generation, which is implemented in the tool \textsc{Cppinv}. This method is limited to linear arithmetic as well.

Besides, \textsc{TRex} \cite{harris2010} integrates existing non-termination proving approaches to develop compositional analysis algorithms for detecting non-termination in multithreaded programs. Different from the method in \textsc{TRex} targeting sequential code, \cite{atig2012} presents a nontermination proving technique for multi-threaded programs via a reduction to nontermination reasoning for sequential programs.  \cite{liu2014} investigates the termination problems of multi-path polynomial programs with equational loop guards and discovering nonterminating inputs for such programs. It shows that the set of all strong non-terminating
inputs and weak non-terminating inputs both correspond to the real varieties of certain polynomial ideals. Recently,\cite{kuwahara2015} proposes a method combining higher-order model checking with predicate abstraction and CEGAR for disproving nontermination of higher-order functional programs. This method reduces the problem of disproving non-termination to the problem of checking a certain branching property of an abstract program, which can be solved by higher-order model checking.

Please refer to \cite{YZZX10,CPR11} for detailed surveys on termination and nontermination analysis of programs.

As opposed to above works without considering robust nontermination, by taking disturbances such as round-off errors in performing numerical implementation of computer programs into account, this paper propose a systematic approach for proving robust nontermination  of a class of computer programs, which are composed of a single loop with a possibly complicated switch-case type loop body and encountered often in current embedded systems. The problem of robust conditional nontermination is reduced to a problem of solving a single equation derived via dynamic programming principle, and semi-definite programs could be employed to solve such optimal control problem efficiently in some situations. 

The underlying idea in this work is in sprit analogous to that in \cite{roozbehani2013}, which is pioneer in proposing a systematic framework to conduct verification of numerical software based on Lyapunov invariance in control theroy. Our method for conducting (robust) verification of numerical software falls within the framework proposed in \cite{roozbehani2013}. The primary contribution of our work is that we systematically investigate a class of computer programs and reduce the nontermination problem for such computer programs to a mathematical equation, thus resulting in an efficient nontermination verification method, as indicated in Introduction \ref{Int}.

\section{Conclusion and Future Work}
\label{con}
In this paper we presented a system-theoretic framework to numerical software analysis and considered the problem of conditional robust non-termination analysis for a class of computer programs composed of a single loop with a possibly complicated switch-case type loop body, which is encountered often in real-time embedded systems. The maximal robust nontermination set of initial configurations in our method was characterized  by a solution to a mathematical equation. Although it is non-trivial to solve gained equation, in the case of polynomial assignments in the loop body and basic semi-algebraic sets in \textbf{Program} \ref{alg}, the equation could be relaxed as a semi-definite program, which falls within the convex programming framework and can be solved efficiently via interior point methods. Finally, we have reported experiments with encouraging results to demonstrate the merits of our method.


However, there are a lot of works remaining to be done. For instance, \oomit{a semi-definite programming relaxation to solve the equation in this paper is sound but incomplete since a solution to the constructed equation is guaranteed to exist in the family of lower semicontinuous functions if the maximal robust nontermination set is not empty, and a continuous solution is not guaranteed. As pointed out previously, a lower semicontinuous function could be approximated by a polynomial function with desired accuracy, we would like to investigate such approximation based on semi-definite programming or linear programming formulation in our future work. Besides,} the semi-definite programming solver is implemented with floating point computations, we have no absolute guarantee on the results it provides. In future work, we need a sound and efficient verification procedure such as that presented in
\cite{platzer2009,sturm2011,LinWYZ14,roux2016} that is able to check the result from the solver and help us decide whether the result is qualitatively correct. Besides, the presented work can be extended in several directions, these include robust nontermination analysis for computer programs with nested loops and robust invariant generations with or without constraints \cite{sankaranarayanan,kapur2006,liu2011}.



\bibliographystyle{abbrv}
\bibliography{reference}

\begin{thebibliography}{10}

\bibitem{atig2012}
M.~Atig, A.~Bouajjani, M.~Emmi, and A.~Lal.
\newblock Detecting fair non-termination in multithreaded programs.
\newblock In {\em Computer Aided Verification}, pages 210--226. Springer, 2012.

\bibitem{blanchini2008set}
F.~Blanchini and S.~Miani.
\newblock {\em Set-Theoretic Methods in Control}.
\newblock Springer, 2008.

\bibitem{borralleras2017}
C.~Borralleras, M.~Brockschmidt, D.~Larraz, A.~Oliveras,
  E.~Rodr{\'\i}guez-Carbonell, and A.~Rubio.
\newblock Proving termination through conditional termination.
\newblock In {\em International Conference on Tools and Algorithms for the
  Construction and Analysis of Systems}, pages 99--117. Springer, 2017.

\bibitem{Bourbaki2013}
N.~Bourbaki.
\newblock {\em General Topology: Chapters 1--4}, volume~18.
\newblock Springer Science \& Business Media, 2013.

\bibitem{boyd1994}
S.~Boyd, L.~El~Ghaoui, E.~Feron, and V.~Balakrishnan.
\newblock {\em Linear Matrix Inequalities in System and Control Theory}.
\newblock SIAM, 1994.

\bibitem{brockschmidt2011}
M.~Brockschmidt, T.~Str{\"o}der, C.~Otto, and J.~Giesl.
\newblock Automated detection of non-termination and nullpointerexception s for
  java bytecode.
\newblock In {\em International Conference on Formal Verification of
  Object-Oriented Software}, pages 123--141. Springer, 2011.

\bibitem{Chen2014}
H.-Y. Chen, B.~Cook, C.~Fuhs, K.~Nimkar, and P.~O'Hearn.
\newblock Proving nontermination via safety.
\newblock In {\em International Conference on Tools and Algorithms for the
  Construction and Analysis of Systems}, pages 156--171. Springer, 2014.

\bibitem{ChenHWZ15}
Y.-F. Chen, C.-D. Hong, B.-Y. Wang, and L.~Zhang.
\newblock Counterexample-guided polynomial loop invariant generation by
  lagrange interpolation.
\newblock In {\em International Conference on Computer Aided Verification},
  pages 658--674. Springer, 2015.

\bibitem{cook2014}
B.~Cook, C.~Fuhs, K.~Nimkar, and P.~O'Hearn.
\newblock Disproving termination with overapproximation.
\newblock In {\em Proceedings of the 14th Conference on Formal Methods in
  Computer-Aided Design}, pages 67--74. FMCAD Inc, 2014.

\bibitem{CPR11}
B.~Cook, A.~Podelski, and A.~Rybalchenko.
\newblock Proving program termination.
\newblock {\em Communications of the ACM}, 54(5):88--98, 2011.

\bibitem{cousot2010}
P.~Cousot and R.~Cousot.
\newblock A gentle introduction to formal verification of computer systems by
  abstract interpretation, 2010.

\bibitem{fedkiw2002}
S.~O.~R. Fedkiw and S.~Osher.
\newblock Level set methods and dynamic implicit surfaces.
\newblock {\em Surfaces}, 44:77, 2002.

\bibitem{Floyd67}
R.~W. Floyd.
\newblock Assigning meanings to programs.
\newblock {\em Mathematical Aspects of Computer Science}, 19(19-32):1, 1967.

\bibitem{giesl2014}
J.~Giesl, M.~Brockschmidt, F.~Emmes, F.~Frohn, C.~Fuhs, C.~Otto,
  M.~Pl{\"u}cker, P.~Schneider-Kamp, T.~Str{\"o}der, S.~Swiderski, et~al.
\newblock Proving termination of programs automatically with aprove.
\newblock In {\em International Joint Conference on Automated Reasoning}, pages
  184--191. Springer, 2014.

\bibitem{giesl2015review}
P.~Giesl and S.~Hafstein.
\newblock Review on computational methods for lyapunov functions.
\newblock {\em Discrete and Continuous Dynamical Systems-Series B},
  20(8):2291--2331, 2015.

\bibitem{gupta2008}
A.~Gupta, T.~A. Henzinger, R.~Majumdar, A.~Rybalchenko, and R.-G. Xu.
\newblock Proving non-termination.
\newblock {\em ACM Sigplan Notices}, 43(1):147--158, 2008.

\bibitem{harris2010}
W.~R. Harris, A.~Lal, A.~V. Nori, and S.~K. Rajamani.
\newblock Alternation for termination.
\newblock In {\em International Static Analysis Symposium}, pages 304--319.
  Springer, 2010.

\bibitem{Hoare69}
C.~A.~R. Hoare.
\newblock An axiomatic basis for computer programming.
\newblock {\em Communications of the ACM}, 12(10):576--580, 1969.

\bibitem{jarvis2003}
Z.~W. Jarvis-Wloszek.
\newblock {\em Lyapunov based analysis and controller synthesis for polynomial
  systems using sum-of-squares optimization}.
\newblock PhD thesis, University of California, Berkeley, 2003.

\bibitem{kapur2006}
D.~Kapur.
\newblock Automatically generating loop invariants using quantifier
  elimination.
\newblock In {\em Dagstuhl Seminar Proceedings}. Schloss
  Dagstuhl-Leibniz-Zentrum f{\"u}r Informatik, 2006.

\bibitem{kouramas2005}
K.~I. Kouramas, S.~V. Rakovic, E.~C. Kerrigan, J.~Allwright, and D.~Q. Mayne.
\newblock On the minimal robust positively invariant set for linear difference
  inclusions.
\newblock In {\em Decision and Control, 2005 and 2005 European Control
  Conference. CDC-ECC'05. 44th IEEE Conference on}, pages 2296--2301. IEEE,
  2005.

\bibitem{kuwahara2015}
T.~Kuwahara, R.~Sato, H.~Unno, and N.~Kobayashi.
\newblock Predicate abstraction and \textsc{CEGAR} for disproving termination
  of higher-order functional programs.
\newblock In {\em International Conference on Computer Aided Verification},
  pages 287--303. Springer, 2015.

\bibitem{Larraz2014}
D.~Larraz, K.~Nimkar, A.~Oliveras, E.~Rodr{\'\i}guez-Carbonell, and A.~Rubio.
\newblock Proving non-termination using {Max-SMT}.
\newblock In {\em International Conference on Computer Aided Verification},
  pages 779--796. Springer, 2014.

\bibitem{lasserre2015}
J.~B. Lasserre.
\newblock Tractable approximations of sets defined with quantifiers.
\newblock {\em Mathematical Programming}, 151(2):507--527, 2015.

\bibitem{li2017}
Y.~Li.
\newblock Witness to non-termination of linear programs.
\newblock {\em Theoretical Computer Science}, 681:75--100, 2017.

\bibitem{LinWYZ14}
W.~Lin, M.~Wu, Z.~Yang, and Z.~Zeng.
\newblock Exact safety verification of hybrid systems using sums-of-squares
  representation.
\newblock {\em Science China Information Sciences}, 57(5):1--13, 2014.

\bibitem{liu2014}
J.~Liu, M.~Xu, N.~Zhan, and H.~Zhao.
\newblock Discovering non-terminating inputs for multi-path polynomial
  programs.
\newblock {\em Journal of Systems Science and Complexity}, 27(6):1286--1304,
  2014.

\bibitem{liu2011}
J.~Liu, N.~Zhan, and H.~Zhao.
\newblock Computing semi-algebraic invariants for polynomial dynamical systems.
\newblock In {\em Proceedings of the ninth ACM international conference on
  Embedded software}, pages 97--106. ACM, 2011.

\bibitem{lofberg2004}
J.~Lofberg.
\newblock {YALMIP}: A toolbox for modeling and optimization in {MATLAB}.
\newblock In {\em Computer Aided Control Systems Design, 2004 IEEE
  International Symposium on}, pages 284--289. IEEE, 2004.

\bibitem{luk2015}
C.~K. Luk and G.~Chesi.
\newblock On the estimation of the domain of attraction for discrete-time
  switched and hybrid nonlinear systems.
\newblock {\em International Journal of Systems Science}, 46(15):2781--2787,
  2015.

\bibitem{magron2017}
V.~Magron, P.-L. Garoche, D.~Henrion, and X.~Thirioux.
\newblock Semidefinite approximations of reachable sets for discrete-time
  polynomial systems.
\newblock {\em arXiv preprint arXiv:1703.05085}, 2017.

\bibitem{mitchell2005}
I.~M. Mitchell, A.~M. Bayen, and C.~J. Tomlin.
\newblock A time-dependent hamilton-jacobi formulation of reachable sets for
  continuous dynamic games.
\newblock {\em IEEE Transactions on automatic control}, 50(7):947--957, 2005.

\bibitem{mosek2015mosek}
A.~Mosek.
\newblock The {MOSEK} optimization toolbox for {MATLAB} manual.
\newblock {\em Version 7.1 (Revision 28)}, page~17, 2015.

\bibitem{Naur66}
P.~Naur.
\newblock Proof of algorithms by general snapshots.
\newblock {\em BIT Numerical Mathematics}, 6(4):310--316, 1966.

\bibitem{platzer2009}
A.~Platzer, J.-D. Quesel, and P.~R{\"u}mmer.
\newblock Real world verification.
\newblock In {\em International Conference on Automated Deduction}, pages
  485--501. Springer, 2009.

\bibitem{PrajnaJ04}
S.~Prajna and A.~Jadbabaie.
\newblock Safety verification of hybrid systems using barrier certificates.
\newblock In {\em International Workshop on Hybrid Systems: Computation and
  Control}, pages 477--492. Springer, 2004.

\bibitem{prajna2007}
S.~Prajna, A.~Jadbabaie, and G.~J. Pappas.
\newblock A framework for worst-case and stochastic safety verification using
  barrier certificates.
\newblock {\em IEEE Transactions on Automatic Control}, 52(8):1415--1428, 2007.

\bibitem{rakovic2005}
S.~V. Rakovic, E.~C. Kerrigan, K.~I. Kouramas, and D.~Q. Mayne.
\newblock Invariant approximations of the minimal robust positively invariant
  set.
\newblock {\em IEEE Transactions on Automatic Control}, 50(3):406--410, 2005.

\bibitem{rebiha2014}
R.~Rebiha, N.~Matringe, and A.~V. Moura.
\newblock Generating asymptotically non-terminating initial values for linear
  programs.
\newblock {\em arXiv preprint arXiv:1407.4556}, 2014.

\bibitem{roozbehani2013}
M.~Roozbehani, A.~Megretski, and E.~Feron.
\newblock Optimization of {Lyapunov} invariants in verification of software
  systems.
\newblock {\em IEEE Transactions on Automatic Control}, 58(3):696--711, 2013.

\bibitem{roux2016}
P.~Roux, Y.-L. Voronin, and S.~Sankaranarayanan.
\newblock Validating numerical semidefinite programming solvers for polynomial
  invariants.
\newblock In {\em International Static Analysis Symposium}, pages 424--446.
  Springer, 2016.

\bibitem{sankaranarayanan}
S.~Sankaranarayanan, H.~B. Sipma, and Z.~Manna.
\newblock Non-linear loop invariant generation using gr{\"o}bner bases.
\newblock {\em ACM SIGPLAN Notices}, 39(1):318--329, 2004.

\bibitem{sassi2012controller}
M.~A.~B. Sassi and A.~Girard.
\newblock Controller synthesis for robust invariance of polynomial dynamical
  systems using linear programming.
\newblock {\em Systems \& Control Letters}, 61(4):506--512, 2012.

\bibitem{sassi2014}
M.~A.~B. Sassi, A.~Girard, and S.~Sankaranarayanan.
\newblock Iterative computation of polyhedral invariants sets for polynomial
  dynamical systems.
\newblock In {\em Decision and Control (CDC), 2014 IEEE 53rd Annual Conference
  on}, pages 6348--6353. IEEE, 2014.

\bibitem{schaich2015}
R.~M. Schaich and M.~Cannon.
\newblock Robust positively invariant sets for state dependent and scaled
  disturbances.
\newblock In {\em Decision and Control (CDC), 2015 IEEE 54th Annual Conference
  on}, pages 7560--7565. IEEE, 2015.

\bibitem{sturm2011}
T.~Sturm and A.~Tiwari.
\newblock Verification and synthesis using real quantifier elimination.
\newblock In {\em Proceedings of the 36th international symposium on Symbolic
  and algebraic computation}, pages 329--336. ACM, 2011.

\bibitem{tahir2012}
F.~Tahir and I.~M. Jaimoukha.
\newblock Robust positively invariant sets for linear systems subject to
  model-uncertainty and disturbances.
\newblock {\em IFAC Proceedings Volumes}, 45(17):213--217, 2012.

\bibitem{Tiwari04}
A.~Tiwari.
\newblock Termination of linear programs.
\newblock In {\em International Conference on Computer Aided Verification},
  pages 70--82. Springer, 2004.

\bibitem{topcu2008}
U.~Topcu, A.~Packard, and P.~Seiler.
\newblock Local stability analysis using simulations and sum-of-squares
  programming.
\newblock {\em Automatica}, 44(10):2669--2675, 2008.

\bibitem{topcu2010}
U.~Topcu, A.~K. Packard, P.~Seiler, and G.~J. Balas.
\newblock Robust region-of-attraction estimation.
\newblock {\em IEEE Transactions on Automatic Control}, 55(1):137--142, 2010.

\bibitem{trodden2016}
P.~Trodden.
\newblock A one-step approach to computing a polytopic robust positively
  invariant set.
\newblock {\em IEEE Transactions on Automatic Control}, 61(12):4100--4105,
  2016.

\bibitem{vandenberghe1996}
L.~Vandenberghe and S.~Boyd.
\newblock Semidefinite programming.
\newblock {\em SIAM Review}, 38(1):49--95, 1996.

\bibitem{velroyen2008}
H.~Velroyen and P.~R{\"u}mmer.
\newblock Non-termination checking for imperative programs.
\newblock {\em Tests and Proofs}, pages 154--170, 2008.

\bibitem{FAC11}
B.~Xia, L.~Yang, N.~Zhan, and Z.~Zhang.
\newblock Symbolic decision procedure for termination of linear programs.
\newblock {\em Formal Aspects of Computing}, 23(2):171--190, 2011.

\bibitem{YZZX10}
L.~Yang, C.~Zhou, N.~Zhan, and B.~Xia.
\newblock Recent advances in program verification through computer algebra.
\newblock {\em Frontiers of Computer Science in China}, 4(1):1--16, 2010.

\end{thebibliography}

\end{document}